\documentclass[reqno,10pt]{amsart}

\usepackage{amsmath,amsfonts,amsthm,amssymb,amscd}
\usepackage{graphicx} 
\usepackage{epsfig}
\usepackage{inputenc}
\usepackage{upref} 
\usepackage{mathptmx}
\usepackage[T1]{fontenc}
\usepackage{bm}
\usepackage{calc}
\usepackage{caption}
\usepackage{subcaption}

\addtolength{\textwidth}{2cm} \addtolength{\hoffset}{-1cm}
\addtolength{\marginparwidth}{-1cm} \addtolength{\textheight}{2cm}
\addtolength{\voffset}{-1cm}




\newtheorem{theorem}{Theorem}[section]
\newtheorem{conjt}[theorem]{Conjecture}
\newtheorem{corollary}[theorem]{Corollary}
\newtheorem{lemma}[theorem]{Lemma}
\newtheorem{proposition}[theorem]{Proposition}

\newtheorem{definition}[theorem]{Definition}

\newtheorem{remark}[theorem]{Remark}

\newcommand\bd{\begin{displaymath}}
\newcommand\ed{\end{displaymath}}
\newcommand\be{\begin{equation}}
\newcommand\ee{\end{equation}}
\newcommand\bea{\begin{eqnarray}}
\newcommand\eea{\end{eqnarray}}
\newcommand\bi{\begin{itemize}}
\newcommand\ei{\end{itemize}}
\newcommand\ben{\begin{enumerate}}
\newcommand\een{\end{enumerate}}
\newcommand\bc{\begin{center}}
\newcommand\ec{\end{center}}
\newcommand\ba{\begin{array}}
\newcommand\ea{\end{array}}

\newcommand{\R}{\mathbb{R}}





\title[Tiling Space by n-Hedra]{Surface-area-minimizing $n$-hedral Tiles}

\begin{document}

\author[W. Ghang]{Whan Ghang}
\author[Z. Martin]{Zane Martin}
\author[S. Waruhiu]{Steven Waruhiu}


\begin{abstract}
We provide a list of conjectured surface-area-minimizing $n$-hedral tiles of space for $n$ from 4 to 14, previously known only for $n$ equal to 5 or 6. We find the optimal "orientation-preserving" tetrahedral tile $(n=4)$, and we give a nice proof for the optimal 5-hedron (a triangular prism).
\end{abstract}

\maketitle

\section{Introduction}

For fixed $n$, we seek a unit-volume $n$-hedral tile of space that minimizes surface area. Our Conjecture \ref{best3Dtiles} provides candidates from $n=4$, a certain irregular tetrahedron, to $n\geq 14$, Kelvin's truncated octahedron (see Figs. 1-7). The conjecture is known for $n=6$ and $n=5$. That the cube is the best 6-hedron, tile or not, is well known \cite{ftoth} (See Thm. \ref{Florianpf}). Theorem \ref{existspoly} shows that among convex polyhedra, for fixed $n$, there exists a surface-area-minimizing $n$-hedral tile of space. Section \ref{secprism} gives some properties of prisms and a proof that a certain hexagonal prism is the surface-area-minimizing prism. Theorem \ref{bestfivepoly} gives a nice new proof that a certain triangular prism is the surface-area-minimizing 5-hedron. Theorem \ref{besttetra} proves that a third of a triangular prism is the surface-area-minimizing "orientation-preserving" 4-hedral tile, based on a classification of tetrahedral tiles by Sommerville \cite{somville}. (Unfortunately the regular tetrahedron does not tile space.)

\subsection{Acknowledgements}
This paper is work of the 2012 ``SMALL'' Geometry Group, an undergraduate research group at Williams College continued by Waruhiu. Thanks to our advisor Frank Morgan, for his patience, guidance, and invaluable input. Thanks to Andrew Kelly and Max Engelstein for contributions to the summer work that laid the groundwork for this paper. Thanks to the National Science Foundation for grants to Morgan and the Williams College ``SMALL'' Research Experience for Undergraduates, and to Williams College for additional funding. Additionally thank you to the Mathematical Association of America (MAA), MIT, the University of Chicago, and Williams College for grants to Professor Morgan for funding in support of trips to speak at MathFest 2012 and the Joint Meetings 2013 in San Diego.

\section{Tiling of Space}
\label{space}

We assume that a space-filing polyhedron tiles $\R^3$ with congruent copies of itself and the polyhedra are face-to-face, i.e., that polyhedra meet only along entire faces, entire edges, or at vertices. We have the following conjecture:

\begin{conjt}
\label{best3Dtiles}
For fixed $n$ and unit volume, the following provide the surface-area-minimizing $n$-hedral tiles of $\R^3$ (see Figs. 1-7):
\begin{enumerate}
\item $n=4$: a tetrahedron formed by four isosceles right triangles with two sides of $\sqrt{3}$ and one side of 2. It is also formed by cutting a triangular prism into three congruent tetrahedra;
\item $n=5$: a right equilateral-triangular prism;
\item $n=6$: the cube;
\item $n=7$: a right Cairo or Prismatic pentagonal prism;
\item $n=8$: the gabled rhombohedron described by Goldberg \cite{goldbergocta} as having four pentagonal and four quadrilateral sides and the hexagonal prism;
\item $n=9$: an enneahedron with three non-adjacent congruent square faces and six congruent pentagonal faces;
\item $n=10$ and $11$: a decahedral "barrel" with congruent square bases and eight congruent pentagonal sides;
\item $n=12$: a 12-hedron of Type 12-VIII described by Goldberg \cite{goldbergdodeca} with 20 vertices of degree three and none of degree four (one half the truncated octahedron (10));
\item $n=13$: a 13-hedron of Type 13-IV described by Goldberg \cite{goldberg>12} as cutting a 14 sided hexagonal prism capped on each end by four faces in half;
\item $n \geq 14$: Kelvin's truncated octahedron (\cite{kelvin}, see \cite[pp. 157-171]{morggeo}).

\end{enumerate}
\end{conjt}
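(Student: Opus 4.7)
The plan is to combine the existence result of Theorem \ref{existspoly} with a case-by-case analysis. Since that theorem already guarantees a surface-area-minimizing convex $n$-hedral tile exists for each $n$, the task reduces to identifying, for each $n$, the combinatorial type and the precise metric shape of the minimizer. I would split the problem into three regimes: the small cases $n=4,5,6$ where classification results let one enumerate all candidates; the intermediate cases $7\leq n\leq 13$ where one exploits Goldberg's enumerations; and the tail $n\geq 14$ where a limiting isoperimetric argument should force the answer to be Kelvin's truncated octahedron.

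For the small cases, the approach would be: for $n=6$ invoke Fejes T\'oth's theorem (Theorem \ref{Florianpf}); for $n=5$ invoke the new proof of Theorem \ref{bestfivepoly}; and for $n=4$ use Sommerville's classification \cite{somville} of tetrahedral tiles of $\R^3$, normalize each candidate to unit volume, and check that the tetrahedron with four isosceles-right-triangular faces minimizes (this is carried out under the orientation-preserving hypothesis in Theorem \ref{besttetra}; removing that hypothesis would require extending Sommerville's list, a substantial obstacle in its own right).

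For $7\leq n\leq 13$, the strategy is to cull candidates from Goldberg's catalogues \cite{goldbergocta,goldbergdodeca,goldberg>12} and then, within each combinatorial type, solve a finite-dimensional optimization problem: minimize surface area over the shape parameters subject to the tiling and unit-volume constraints. Prism candidates---the Cairo and prismatic pentagonal prisms at $n=7$ and the hexagonal prism at $n=8$---would be handled via the prism machinery developed in Section \ref{secprism}; non-prism candidates such as Goldberg's gabled rhombohedron, the 12-VIII type, and the 13-IV type would be compared by direct numerical computation after solving the tiling constraints explicitly. For $n\geq 14$, the goal would be to show that the combinatorial complexity forces the surface area to exceed that of Kelvin's cell, perhaps via an averaging or lower-bound argument based on edge counts, Euler's formula, and convexity of the relevant face-area functional.

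The principal obstacle, and the reason the statement remains a conjecture rather than a theorem for most $n$, is that no complete classification of convex polyhedral monohedral tiles of $\R^3$ is known for $n\geq 7$; Goldberg's lists are believed but not proved to be exhaustive. Thus the truly hard step is a sharp intrinsic lower bound on the surface area of an $n$-hedral convex unit-volume tile, independent of combinatorial type, in the spirit of Hales's honeycomb theorem in the plane. Absent such a bound, one is forced to rely on the (conjectural) completeness of Goldberg's enumerations, so that the overall argument becomes a finite verification in each combinatorial class \emph{conditional} on those enumerations, which explains why the statement is advanced only as a conjecture outside the settled range $n\in\{4,5,6\}$.
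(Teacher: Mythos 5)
This statement appears in the paper only as a conjecture, supported by exactly the ingredients you list: Fejes T\'oth's theorem (Theorem \ref{Florianpf}) for $n=6$, Theorem \ref{bestfivepoly} for $n=5$, Sommerville's classification for the orientation-preserving $n=4$ case (Theorem \ref{besttetra}), numerical comparison of candidates drawn from Goldberg's catalogues for $7\le n\le 13$, and the Kelvin conjecture for $n\ge 14$. Your proposal therefore takes essentially the same approach as the paper, and you correctly identify why the statement remains unproved outside $n\in\{4,5,6\}$: there is no complete classification of polyhedral tiles for general $n$ and no intrinsic lower bound in the spirit of Hales's honeycomb theorem.
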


\begin{remark}
\emph{Goldberg (\cite[p.231]{goldberg}, see \cite[p. 213]{florian}) conjectured that a surface-area-minimizing $n$-hedron has only vertices of degree three, but it may well not tile. All the vertices of our conjectured polyhedra have degree three.}
\end{remark}

\begin{figure}
	\centering
	\centering
			\includegraphics[scale=0.7]{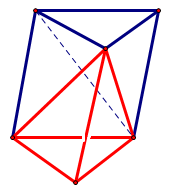}
			\caption{A tetrahedron formed by cutting a triangular prism into three congruent tetrahedra is the conjectured surface-area-minimizing tetrahedral tile.}
			\label{fig:besttetrah}
            \centering
			\includegraphics[scale=0.7]{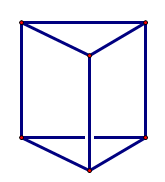}
			\caption{A right equilateral-triangular prism is the surface-area-minimizing 5-hedron.}
	\includegraphics[scale=0.7]{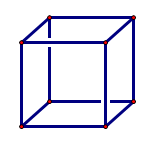}
	\caption{The cube is the surface-area-minimizing 6-hedron.}
	\includegraphics[scale=0.6]{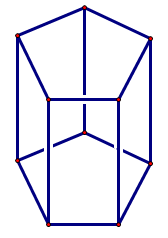}
	\caption{A right Cairo prism is the conjectured surface-area-minimizing 7-hedral tile.}
\end{figure}
\begin{figure}
	\centering
	\includegraphics[scale=0.6]{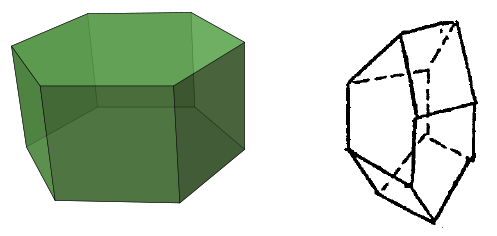}
	\caption{Goldberg's \cite[Fig. 8-VI]{goldbergocta} gabled rhombohedron and the hexagonal prism \cite{wiki} are the conjectured surface-area-minimizing 8-hedral tiles. They have the same surface area.}
	\includegraphics[scale=0.4]{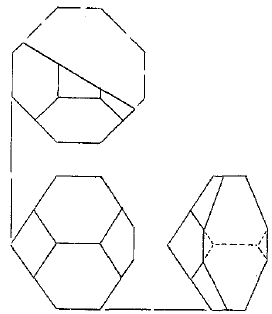}
	\caption{Goldberg's \cite{goldbergdodeca} one half the truncated octahedron is the conjectured surface-area-minimizing 12-hedral tile.}
	\includegraphics[scale=0.4]{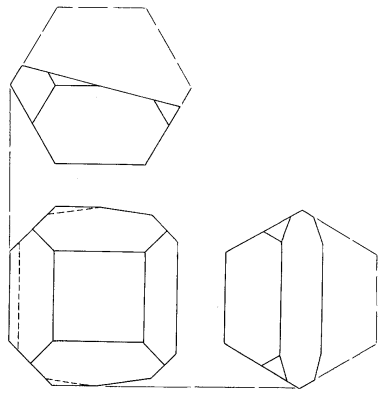}
	\caption{Goldberg's \cite{goldberg>12} Type 13-IV is the conjectured surface-area-minimizing 13-hedral tile. It obtained by cutting a Goldberg's Type 14-IV 14-hedron in half.}
	\includegraphics[scale=0.5]{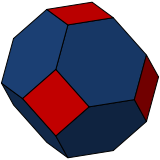}
	\caption{Kelvin's truncated octahedron is the conjectured surface-area-minimizing polyhedral tile. \cite{wiki}}
\end{figure}

Unfortunately, the regular tetrahedron, which is the surface-area-minimizing tetrahedron, does not tile because the dihedral angles of $70.53^\circ$ cannot add up to $360^\circ$ (Fig. \ref{fig:notiletetra}). We provide the best orientation-preserving tetrahedral tile in Theorem \ref{besttetra}, but have not been able to remove the orientation-preserving assumption.

In the known cases $n=5$ and $n=6$, the candidates are surface-area-minimizing unit-volume $n$-hedra and hence, of course, the optimal $n$-hedral tiles. Minkowski \cite{mink} proved that such an $n$-hedron exists, as does Steinitz \cite{Steiz}. (We are not sure whether their arguments imply the existence of a surface-area-minimizing unit-volume $n$-hedral \textit{tile}.)

The case $n=6$ follows immediately from a theorem of Goldberg \cite[p. 230]{goldberg} and also given by Fejes T\'{o}th.

\begin{theorem}
\label{Florianpf} (\cite[pp. 174 - 180]{ftoth}, \emph{see} \cite[pp. 212 - 213]{florian}).
If F denotes the surface area and V the volume of a three-dimensional convex polyhedron with f faces, then 
\[ \frac{F^3}{V^2} \geq 54(f-2)\tan{\omega_f}(4\sin^2{\omega_f-1})  \]
where $\omega_f = \pi f/6(f -2)$. Equality holds only for the regular tetrahedron, the cube, and the regular dodecahedron.
\end{theorem}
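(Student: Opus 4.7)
The plan is to adopt the classical Lindelöf--Fejes Tóth approach, in three stages: reduction to polyhedra with an inscribed sphere, a per-face area bound, and a combinatorial-convexity step.

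First I would reduce to the circumscribed case. Lindelöf's theorem states that among convex polyhedra with prescribed outward face normals and prescribed volume, surface area is minimized by the (unique, up to translation) polyhedron admitting an inscribed sphere. Since this operation preserves the face count $f$, we may assume our polyhedron is circumscribed about a sphere of radius $r$ centered at some interior point $P$. The pyramidal decomposition with common apex $P$ and bases the faces gives $V = \tfrac{1}{3}rF$, hence $F^3/V^2 = 9F/r^2$, and the task reduces to proving $F/r^2 \geq 6(f-2)\tan\omega_f(4\sin^2\omega_f - 1)$.

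Next, for each face $F_i$ with $n_i$ sides, tangent to the sphere at $T_i$, I would establish a per-face lower bound $A_i/r^2 \geq n_i\tan(\pi/n_i)(4\sin^2(\pi/n_i)-1)$, valid in the range $n_i \leq 6(f-2)/f$ where the factor is nonnegative. The geometric input is that the distance from $T_i$ to the edge shared with the adjacent face along dihedral angle $\theta_e$ equals $r\cot(\theta_e/2)$. The bound is obtained by symmetrizing $F_i$ to a right regular $n_i$-pyramid over a regular $n_i$-gon centered at $T_i$ and then imposing the dihedral-angle constraint at a trivalent vertex, $\sin(\theta/2) = \cos(\pi/3)/\sin(\pi/n)$, which yields the key identity $\cot^2(\theta/2) = 4\sin^2(\pi/n)-1$---exactly the factor appearing in the theorem.

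Finally, I would sum and invoke Euler's formula. Because every vertex of a convex polyhedron has degree at least three, $\sum_i n_i = 2E \leq 6(f-2)$, so the average $\bar n \leq 6(f-2)/f = \pi/\omega_f$. After verifying convexity of $g(n) := n\tan(\pi/n)(4\sin^2(\pi/n)-1)$ on the relevant range, Jensen's inequality gives $\sum_i g(n_i) \geq f\,g(\bar n)$, which is precisely the desired global bound. Equality propagates cleanly through all three stages: the insphere must touch each face at its center, all faces must be congruent regular polygons, and every vertex must have degree exactly three---singling out the tetrahedron ($n=3$), cube ($n=4$), and regular dodecahedron ($n=5$). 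The main obstacle will be the per-face bound in stage two: the factor $4\sin^2(\pi/n_i)-1$ is a genuinely three-dimensional quantity not obtainable from the planar isoperimetric inequality alone, so one must carefully exploit the interaction between face geometry and the circumscribed-sphere hypothesis. Verifying convexity of $g$ for Jensen in stage three is a secondary technical check.
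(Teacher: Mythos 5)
You should first note that the paper does not prove Theorem \ref{Florianpf} at all: it quotes the inequality from Fejes T\'{o}th \cite{ftoth} (see Florian \cite{florian}), so your argument has to stand on its own, and as written it does not. Stage one (Lindel\"{o}f/Minkowski reduction to a polyhedron circumscribed about a sphere of radius $r$, where $V=rF/3$ turns the claim into $F/r^2\ge 6(f-2)\tan\omega_f(4\sin^2\omega_f-1)$) is fine and is indeed how the classical argument begins. The fatal step is stage two: the per-face bound $A_i/r^2\ge n_i\tan(\pi/n_i)\bigl(4\sin^2(\pi/n_i)-1\bigr)$ is false for general circumscribed polyhedra. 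Take the cube $[-1,1]^3$ circumscribed about the unit sphere and truncate one corner by the tangent plane $x+y+z=\sqrt{3}$: every one of the seven faces of the resulting convex polyhedron is tangent to the unit sphere (the new triangular face touches it at its centroid $(1,1,1)/\sqrt{3}$), yet that triangular face has area exactly $6\sqrt{3}-9\approx 1.39\,r^2$, far below the claimed $3\tan(\pi/3)\bigl(4\sin^2(\pi/3)-1\bigr)r^2=6\sqrt{3}\,r^2\approx 10.39\,r^2$. Your proposed derivation cannot repair this: ``imposing the dihedral-angle constraint at a trivalent vertex'' $\sin(\theta/2)=\cos(\pi/3)/\sin(\pi/n_i)$ presupposes exactly the structure (trivalent vertices, congruent regular faces tangent at their centers) that only the three extremal solids possess, and even for the Lindel\"{o}f minimizer no argument is offered that its faces satisfy such a relation; tangency to the insphere alone gives no lower bound on a face's area in terms of $r$ and $n_i$. (Minor points: the stated range $n_i\le 6(f-2)/f$ is presumably a slip for $n_i\le 6$, and in stage three Jensen gives $\sum_i g(n_i)\ge f\,g(\bar n)$ only in combination with monotonicity of $g$ to pass from $\bar n$ to $6(f-2)/f$.)

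What is missing is the global coupling that makes Fejes T\'{o}th's proof work. After the reduction to the circumscribed case one projects the faces centrally from the incenter onto the insphere; the projections tile the sphere, so the subtended solid angles satisfy $\sum_i\omega_i=4\pi$, and the correct per-face lemma bounds $A_i$ from below in terms of \emph{both} $n_i$ and $\omega_i$, the extremal configuration being the regular $n_i$-gon centered at the tangent point with the prescribed solid angle. A small face is then not a violation but a face that consumes little solid angle, forcing the remaining faces to be large. The theorem follows from a joint convexity/averaging argument over the two constraints $\sum_i n_i\le 6(f-2)$ (Euler's formula with vertex degrees at least three) and $\sum_i\omega_i=4\pi$; the factor $4\sin^2\omega_f-1=\cot^2(\theta/2)$ emerges as the output of that optimization, not as an input one may impose face by face. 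Your equality analysis correctly describes the extremals, but without the $\omega_i$ bookkeeping the inequality itself is not established.
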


Regarding $n=7$, Goldberg \cite{goldberg} claims that the right regular pentagonal prism is the surface-area-minimizing 7-hedron. However, the proof, which was given by Lindel\"{o}f, is $-$ in Lindel\"{o}f's words $-$ "only tentative". Furthermore, regular pentagons cannot tile the plane. Therefore, we cannot tile $\R^3$ with the right regular pentagonal prism. The Cairo and Prismatic pentagons (Fig. \ref{fig:pentile}) have recently been proved by Chung et al. \cite[Thm. 3.5]{pen11} as the best pentagonal planar tiles. They are circumscribed about a circle, with three angles of $2\pi/3$ and two angles of $\pi / 2$, adjacent in the Prismatic pentagon and non-adjacent in the Cairo pentagon. We conjecture a right Cairo or Prismatic prism is the surface-area-minimizing 7-hedral tile.

\begin{figure}
	\centering
	 \includegraphics[scale=0.3]{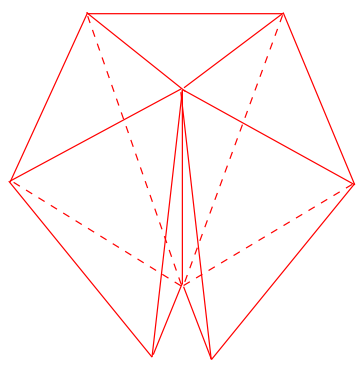}
	\caption{Because the dihedral angles ($70.53^\circ$) of a regular tetrahedron cannot add up to $360^\circ$, the regular tetrahedron does not tile. There is a small gap. \cite{ungor}}
	\label{fig:notiletetra}
\end{figure}

For $n=8$, Goldberg \cite{goldberg} shows that the regular octahedron does not minimize surface area, supporting his conjecture that the surface area minimizer cannot have vertices of degree greater than three. We found that the gabled rhombodecahedron has the same surface area as the regular octahedron (which does not tile) and hexagonal prism (which tiles). Moreover, it has less surface area than the gyrobifastigium suggested by Li et al. \cite[p. 30]{g10}. The gabled rhombodecahedron is distinguished among Goldberg's \cite{goldbergocta} octahedral tiles by having all vertices of degree three.
 
The enneahedron and decahedron are inspired by two of the eight nontrivial geodesic nets on the sphere meeting in threes at $2\pi/3$, classified by Heppes (see \cite{taylornets} and \cite[pp. 132]{morggeo}), although these polyhedra inscribed in spheres are not circumscribed about spheres as surface area minimizers would be. We do not know if any such polyhedra tile space. The conjectured 13-hedron is distinguished by having all vertices of degree three \cite{goldberg>12}.

\begin{figure}
	\centering
	\includegraphics[scale=0.7]{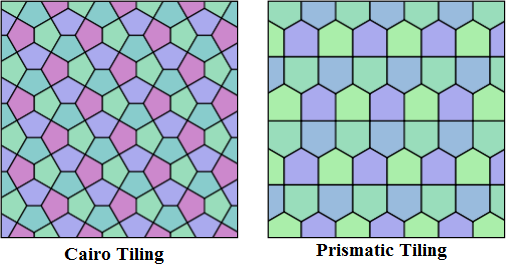}
	\caption{The Cairo and Prismatic pentagons have recently been proved (Chung et al. \cite[Thm. 3.5]{pen11}) as the best pentagonal planar tiles.}
	\label{fig:pentile}
\end{figure}

For any $n \geq 14$, we follow the famous Kelvin Conjecture $-$ that the truncated octahedron is the surface-area-minimizing $n$-hedron that tiles space. Table \ref{tab:poly} gives the surface areas of the conjectured minimizers, computed using Proposition \ref{bestheight} and the Quickhull algorithm \cite{qhull}. Table \ref{tab:comp} shows surface areas of competing 12 and 13-hedra, Rhombic Dodecahedron, Elongated Dodecahedron, Goldberg's Type 13-I, and Type 13-II \cite{goldberg>12}. Note that an $n_0$-hedron may be considered a (degenerate) $n$-hedron for any $n>n_0$ by subdividing its faces, as in \ref{best3Dtiles}(7) and (10).

\begin{table}[ht]
	\centering
    \begin{tabular}{|c|c|}
        \hline
        \begin{tabular}[x]{@{}c@{}} $n=4$ \\ One third triangular prism \end{tabular} & \begin{tabular}[x]{@{}c@{}}\textbf{7.4126}\\ \includegraphics[scale=0.2]{besttetra.png} \end{tabular} \\ \hline
        \begin{tabular}[x]{@{}c@{}} $n=5$ \\ A triangular prism \end{tabular} & \begin{tabular}[x]{@{}c@{}}\textbf{6.5467}\\ \includegraphics[scale=0.2]{righttriangularprism.png} \end{tabular} \\ \hline
        \begin{tabular}[x]{@{}c@{}} $n=6$ \\ Cube \end{tabular} & \begin{tabular}[x]{@{}c@{}}\textbf{6.0000}\\ \includegraphics[scale=0.2]{cube.png} \end{tabular} \\ \hline
        \begin{tabular}[x]{@{}c@{}} $n=7$ \\ Cairo pentagonal prism \end{tabular} & \begin{tabular}[x]{@{}c@{}}\textbf{5.8629}\\ \includegraphics[scale=0.2]{7-hedra.png} \end{tabular} \\ \hline
        \begin{tabular}[x]{@{}c@{}} $n=8$ \\  A Hexagonal prism \end{tabular} & \begin{tabular}[x]{@{}c@{}}\textbf{5.7191}\\ \includegraphics[scale=0.2]{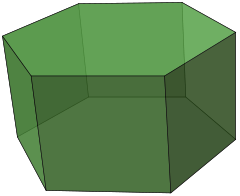} \end{tabular} \\ \hline
        \begin{tabular}[x]{@{}c@{}} $n=9$ \\  An Enneahedron \end{tabular} & \begin{tabular}[x]{@{}c@{}}\textbf{5.5299}\\ \includegraphics[scale=0.2]{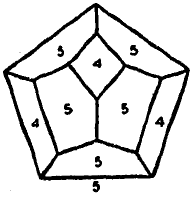} \end{tabular} \\ \hline
        \begin{tabular}[x]{@{}c@{}} $n=10$ and 11 \\ Decahedral barrel \end{tabular} & \begin{tabular}[x]{@{}c@{}}\textbf{5.4434}\\ \includegraphics[scale=0.2]{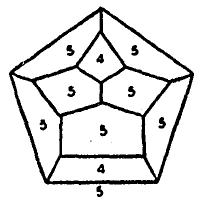} \end{tabular} \\ \hline
        \begin{tabular}[x]{@{}c@{}} $n=12$ \\  Half truncated octahedron \end{tabular} & \begin{tabular}[x]{@{}c@{}}\textbf{5.3199}\\ \includegraphics[scale=0.2]{12-hedra.png} \end{tabular} \\ \hline\\
        \begin{tabular}[x]{@{}c@{}} $n=13$ \\ Goldberg's \cite{goldberg>12} Type 13-IV \end{tabular} & \begin{tabular}[x]{@{}c@{}}\textbf{5.3189}\\ \includegraphics[scale=0.2]{13v.png} \end{tabular} \\ \hline
        \begin{tabular}[x]{@{}c@{}} $n=14$ \\ Kelvin's truncated octahedron \end{tabular} & \begin{tabular}[x]{@{}c@{}}\textbf{5.3147}\\ \includegraphics[scale=0.2]{truncatedoctahedron.jpg} \end{tabular} \\ \hline
    \end{tabular}\\ \vspace{4 mm}
    \caption{Our conjectured surface-area-minimizing unit-volume $n$-hedral tiles.}
    \label{tab:poly}
\end{table} 

\begin{table}[ht]
	\centering
    \begin{tabular}{|c|c|c|c|}
        \hline
        \begin{tabular}[x]{@{}c@{}} $n=12$ \\ Rhombic Dodecahedron \end{tabular} & \begin{tabular}[x]{@{}c@{}}\textbf{5.3454}\\ \includegraphics[scale=0.2]{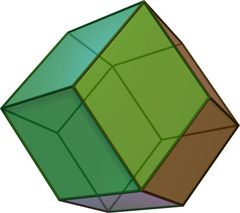} \end{tabular} & \begin{tabular}[x]{@{}c@{}} $n=12$ \\ Elongated Dodecahedron \end{tabular} & \begin{tabular}[x]{@{}c@{}}\textbf{5.4932}\\ \includegraphics[scale=0.2]{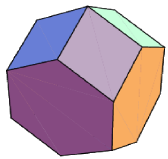} \end{tabular}\\ \hline
        \begin{tabular}[x]{@{}c@{}} $n=13$ \\ Goldberg's Type 13-I \end{tabular} & \begin{tabular}[x]{@{}c@{}}\textbf{5.3640}\\ \includegraphics[scale=0.2]{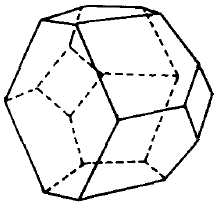} \end{tabular} & \begin{tabular}[x]{@{}c@{}} $n=13$ \\ Goldberg's Type 13-II \end{tabular} & \begin{tabular}[x]{@{}c@{}}\textbf{6.8813}\\ \includegraphics[scale=0.2]{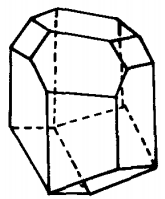} \end{tabular}\\ \hline
        \end{tabular}\\ \vspace{4 mm}
    \caption{Table showing the surface area of competing 12 and 13-hedral tiles.}
    \label{tab:comp}
\end{table}

On the other hand, the following proposition shows that you can always reduce surface area by a small truncation and rescaling, but the resulting polyhedron may not tile. We think the truncated octahedron as far as you can go and still tile.

\begin{proposition}
\label{trunc}
A slight truncation at any strictly convex vertex and rescaling to the original volume reduces the surface area of a polyhedron.
\end{proposition}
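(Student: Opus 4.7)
The plan is to work in the tangent cone at $v$, use the divergence theorem with a carefully chosen constant vector field to compare the area of the new planar base with the total area of the removed lateral pieces, and then verify that the rescaling to restore volume introduces only a higher-order correction. Translate so that $v=0$. Strict convexity of $v$ means the tangent cone $C$ of $P$ at $v$ is a proper convex cone, so its dual cone has nonempty interior. Let $n_1,\dots,n_k$ be the outward unit normals of the faces of $P$ incident to $v$; choose a unit vector $w$ strictly inside the dual cone of $C$ and distinct from each of the finitely many vectors $-n_i$. For $\epsilon>0$ sufficiently small, $P\cap\{x\cdot w\le\epsilon\}$ coincides with a genuine pyramid $P_\epsilon$ with apex $v$, new planar base $F:=P\cap\{x\cdot w=\epsilon\}$, and lateral triangles $L_i$ cut from the faces of $P$ incident to $v$. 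By the conical homogeneity of $C$, $|F|$ and each $|L_i|$ scale as $\epsilon^2$, while $\mathrm{vol}(P_\epsilon)$ scales as $\epsilon^3$.

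Applying the divergence theorem to the divergence-free constant field $w$ on $P_\epsilon$, and using that the outward normal on $F$ is $w$ and on $L_i$ is $n_i$, yields
\[
|F|+\sum_i (n_i\cdot w)\,|L_i|=0.
\]
Since each face with normal $n_i$ passes through $v$ and $P$ lies locally in $\{x\cdot w\ge 0\}$, we have $n_i\cdot w\le 0$; and the choice $n_i\ne -w$ forces $n_i\cdot w>-1$. Hence
\[
\sum_i |L_i|-|F|=\sum_i(1+n_i\cdot w)\,|L_i|>0,
\]
so truncation alone decreases the surface area by a strictly positive amount of order $\epsilon^2$.

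The volume decreases by only $O(\epsilon^3)$, so restoring the original volume requires a linear rescaling by $1+O(\epsilon^3)$, multiplying the surface area by the same factor. This higher-order correction is strictly dominated by the $\Theta(\epsilon^2)$ decrease for sufficiently small $\epsilon>0$, so the net change in surface area is negative. The delicate step is the selection of $w$: it must lie strictly inside the dual cone of $C$ (so that $P_\epsilon$ is a genuine pyramid and every incident face contributes a lateral triangle of positive area) and avoid the finitely many exceptional directions $-n_i$ (so that every summand in the key identity is strictly positive). Strict convexity of $v$ is exactly the hypothesis that makes this selection possible.
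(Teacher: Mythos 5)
Your proof is correct, but it takes a more detailed route than the paper. The paper argues scale-invariantly: it considers the ratio $A^3/V^2$ and observes that under truncation by depth $t$ the logarithmic derivative $3A'/A-2V'/V$ is negative for small $t$ because $A'\propto -t$ while $V'\propto -t^2$; in particular it simply asserts, without proof, the key geometric fact that the truncation decreases area at second order. You replace the ratio argument by an explicit cut-and-rescale estimate (area drops by $\Theta(\epsilon^2)$, volume by $O(\epsilon^3)$, rescaling costs only $1+O(\epsilon^3)$), which is the same order-counting at heart, but you additionally \emph{prove} the positivity that the paper takes for granted: applying the divergence theorem to the constant field $w$ on the pyramid $P_\epsilon$ gives $|F|+\sum_i(n_i\cdot w)|L_i|=0$, hence $\sum_i|L_i|-|F|=\sum_i(1+n_i\cdot w)|L_i|>0$. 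That identity, together with the careful use of the tangent cone and its dual to handle a not-necessarily-convex polyhedron with a strictly convex vertex, is a genuine addition to what the paper writes down. Two small inaccuracies, neither of which damages the argument: the claim that $n_i\cdot w\le 0$ is not true in general (the cutting plane can be nearly parallel to an incident face, making $n_i\cdot w$ close to $+1$ even for $w$ interior to the dual cone), but you never need it, since $n_i\cdot w>-1$ follows from Cauchy--Schwarz once $n_i\ne -w$, and in fact $w$ in the interior of the dual cone already forces $w\ne -n_i$; and restoring the volume multiplies the surface area by the \emph{square} of the linear rescaling factor, which is still $1+O(\epsilon^3)$, so the conclusion stands.
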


\begin{proof}
Instead of rescaling, we show the decrease of the scale invariant area-volume ratio $A^3 / V^2$. Under truncation by a distance $t$, the logarithmic derivative 
$$
\frac{3A'}{A} - \frac{2V'}{V}
$$
is negative for all sufficiently small $t$ because $A'$ is proportional to $-t$, while $V'$ is proportional to $-t^2$.
\end{proof}

Heppes drew our attention to Wolfram Online's \cite{wolfram} discussion of polyhedral tiles. It notes the extensive categorization of polyhedral tiles by Goldberg [G1-G7]. Gr\"{u}nbaum and Shephard \cite{grunbaum} and Wells \cite{wells} discuss the known polyhedral tiles pre-1980, when the maximal $n$ for $n$-hedral tiles was believed to be 26. In 1980, P. Engel \cite[pp. 234-235]{wells} found 172 additional polyhedral tiles with 17 to 38 faces, and more polyhedral tiles have been found subsequently.
\section{Existence of a surface-area-minimizing tile}
\label{existence}
For fixed $n$, Minkowski \cite{mink} proved that among convex polyhedra, there exists a surface-area-minimizing $n$-hedron. We show that if we assume the polyhedron tiles space, then there exists a surface-area-minimizing convex polyhedral tile.

\begin{definition}
\emph{A polyhedron is} nondegenerate \emph{if it does not have any unnecessary edges.}

\emph{The furthest distance between two vertices is the} diameter \emph{of a polyhedron.}

\indent \emph{We call two polyhedra $P$ and $Q$ combinatorially equivalent if there exists a bijection $f$ between the set of the vertices of $P$ and $Q$ such that:}
\begin{enumerate}
\item $v_1v_2$ is an edge of $P$ if and only if $f(v_1)f(v_2)$ is an edge $Q$.
\item $v_1, \dotsc, v_k$ is a face of $P$ if and only if $f(v_1), \dotsc, f(v_k)$ is a face $Q$.
\end{enumerate}
\end{definition}

\begin{proposition}
\label{types}
For any $n$, there are a finite number of combinatorial types of $n$-hedra. 
\end{proposition}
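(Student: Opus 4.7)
The plan is to use Euler's formula together with the face-vertex incidence constraints to bound the number of vertices and edges of any $n$-hedron in terms of $n$, and then to observe that only finitely many combinatorial structures are possible on a bounded vertex set.

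First I would note that in a convex polyhedron (or any polyhedron in question) every face has at least 3 edges and every vertex has degree at least 3 (by nondegeneracy). Since each edge lies on exactly two faces, counting face-edge incidences gives $2E \geq 3F = 3n$. Similarly, since each edge has two endpoints, counting vertex-edge incidences gives $2E \geq 3V$. Combining these with Euler's formula $V - E + F = 2$, so $V = E - n + 2$, and substituting into $3V \leq 2E$ yields
\[
3(E - n + 2) \leq 2E, \qquad \text{so} \qquad E \leq 3n - 6,
\]
and therefore $V \leq 2n - 4$.

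Next, I would observe that a combinatorial type of $n$-hedron is specified by the vertex set, the edge set (a collection of 2-element subsets of vertices), and the face list (which vertices lie on each of the $n$ faces). Since the number of vertices is bounded by $2n-4$, the number of possible edge sets is at most $2^{\binom{2n-4}{2}}$, and the number of possible face assignments is at most $(2^{2n-4})^n$. Thus only finitely many combinatorial structures are possible. Quotienting by the bijection relation in the definition of combinatorial equivalence only decreases the count, so the number of combinatorial types of $n$-hedra is finite.

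The main subtlety, rather than an obstacle, is to make sure we only count realizable combinatorial structures and use a valid nondegeneracy assumption; this is already built into the ``no unnecessary edges'' hypothesis from the preceding definition, which ensures vertex degrees are at least 3 and hence the inequality $2E \geq 3V$ holds. Once that is in place, the argument is essentially a crude counting bound following from Euler's formula.
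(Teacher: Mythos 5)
Your proof is correct, but it takes a genuinely different route from the paper's. The paper first shows that no face of an $n$-hedron can have more than $n-1$ edges (an $n$-gonal face, with each of its edges shared with a distinct neighboring face and two faces meeting along at most one edge, would force at least $n+1$ faces), then counts the possible multisets of face sizes as solutions of $x_3+x_4+\cdots+x_{n-1}=n$, of which there are $\binom{2n-4}{n}$, and finally asserts that each such multiset can be assembled into a polyhedron in only finitely many ways. You instead bound the size of the entire incidence structure: the minimum-degree-3 condition gives $2E\geq 3V$, which combined with Euler's formula $V-E+n=2$ yields $E\leq 3n-6$ and $V\leq 2n-4$; since edges and faces are then subsets of a vertex set of bounded size, only finitely many combinatorial structures, and hence finitely many equivalence classes, can occur. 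Your version makes the final finiteness step completely explicit, whereas the paper's closing claim that ``for each combination, we can arrange the faces in a finite number of ways'' is the least detailed part of its argument; the price is invoking Euler's formula and the degree-3 condition, which is harmless here since the polyhedra under consideration are spherical (indeed convex) and the paper uses exactly these tools in Propositions \ref{combinatorial_prism_face_3} and \ref{fivefaceopt}. The paper's route, in turn, produces the structural fact that every face has at most $n-1$ edges, which is what gets reused later (e.g., that a nondegenerate 5-hedron has only triangular and quadrilateral faces).
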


\begin{proof}

Fix $n$. First, a $n$-hedron's face can have at most $(n-1)$-edges. Assume, on the contrary, that a $n$-hedron contains an $n$-gon. Then since each edge is shared by two faces and two faces share at most one edge, there are at least $n+1$ faces in the $n$-hedron, which is a contradiction. This means that the biggest face can have $n-1$ edges and the smallest is a triangle (3 edges). 

Therefore, we have $n-3$ choices for each face. Hence, the number of possible combinations of $n-3$ faces is equivalent to the number of solutions to the equation 
$$
x_3+x_4+...+x_{n-1} = n
$$
where $x_i$ corresponds to the number of faces with $i$ edges. The number of solutions to the equation is ${2n-4 \choose n}$. It follows that for each combination, we can arrange the faces in a finite number of ways. Therefore, there are a finite number of combinatorial types.
\end{proof}

\begin{remark}
\emph{Not all possible combinations of faces can make a polyhedron. For example for $n=5$, it is possible to have 6 combinations of different faces, but in Proposition \ref{fivefaceopt}, we will prove that the only combinatorial types are either triangular prisms or quadrilateral pyramids.}
\end{remark}

\begin{theorem}
\label{existspoly}
For a fixed $n$, there exists a surface-area-minimizing unit-volume convex n-hedral tile.
\end{theorem}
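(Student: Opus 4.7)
The plan is to run a compactness argument on a minimizing sequence, with the subtle step being the preservation of the tiling property in the limit. Let $\{P_k\}$ be a minimizing sequence of unit-volume convex $n$-hedral tiles with surface areas $S_k \searrow s := \inf S$. Since the unit cube is a 6-hedral tile and, by subdividing faces, can be regarded as an $n$-hedral tile for every $n\ge 6$ (and ad hoc competitors exist for smaller $n$), $s$ is finite and we may assume $S_k \le C$ uniformly.

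First I would invoke Proposition \ref{types}: there are only finitely many combinatorial types of $n$-hedra, so after passing to a subsequence all $P_k$ share one combinatorial type, with vertices labeled $v_1^{(k)},\dots,v_m^{(k)}$. Next I would bound the diameters uniformly: a convex body of unit volume and diameter $D$ has surface area growing without bound as $D\to\infty$ (its longest segment forces at least one transverse cross-section of area $\ge 1/D$, and the lateral area grows like $\sqrt{D}$), so $S_k\le C$ forces $\mathrm{diam}(P_k)\le D_0$ for some $D_0$. Translating so that $v_1^{(k)} = 0$, all vertices of every $P_k$ lie in $\overline{B_{D_0}(0)}$. By Bolzano--Weierstrass and repeated extraction, $v_j^{(k)}\to v_j$ for each $j$, and the limit polyhedron $P := \mathrm{conv}\{v_1,\dots,v_m\}$ is convex with at most $n$ faces, hence counts as an $n$-hedron under the convention that subdividing a face allows any $n_0$-hedron to be viewed as an $n$-hedron for $n\ge n_0$. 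Volume and surface area are continuous in the vertex coordinates for a fixed combinatorial type, so $\mathrm{vol}(P) = 1$ and $\mathrm{area}(P) = s$.

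The hard part, and the reason this is not an immediate corollary of Minkowski's theorem, is showing that $P$ still tiles $\R^3$. For each $k$ fix a face-to-face tiling $\mathcal{T}_k = \{\phi_{k,i}(P_k)\}_{i\in\N}$ of $\R^3$, with $\phi_{k,0}$ the identity. Because $\mathrm{diam}(P_k)\le D_0$, for any $R>0$ the ball $B_R(0)$ meets only those tiles whose defining isometry sends $P_k$ into $B_{R+D_0}(0)$, and there are at most $\mathrm{vol}(B_{R+D_0}(0))$ such tiles, a bound uniform in $k$. The corresponding isometries live in a compact subset of the Euclidean isometry group. A diagonal extraction over $R = 1,2,3,\dots$ and over the (uniformly finitely many) tiles meeting each $B_R$ yields limit isometries $\phi_i$ such that $\phi_{k,i}\to\phi_i$ for each $i$.

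It remains to check that $\{\phi_i(P)\}_{i}$ is still a face-to-face tiling of $\R^3$. Covering is preserved because any point $x\in\R^3$ lies in some $\phi_{k,i}(P_k)$ with $i$ in a finite index set, and the limit tile $\phi_i(P)$ then contains $x$ by continuity. Interior-disjointness and the face-to-face condition are both closed under Hausdorff/vertex convergence of convex polyhedra of fixed combinatorial type, so they pass to the limit as well. Hence $P$ is a unit-volume convex $n$-hedral tile with $\mathrm{area}(P) = s$, which attains the infimum and completes the proof.
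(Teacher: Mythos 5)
Your overall strategy---minimizing sequence, uniform diameter bound, compactness, pass the tiling to the limit---is the same as the paper's, but you invert which step gets the real work. The paper's proof is essentially the diameter bound: it takes the largest cross-section $a_0$ perpendicular to the diameter $D$, gets $a_0\le 6/D$ by fitting a pyramid inside the body, applies the isoperimetric inequality to every slice and integrates to obtain $P_0\ge\sqrt{2\pi D/3}$, i.e. $D\le 3P_0^2/(2\pi)$; everything else is waved off as ``standard compactness results.'' You do the opposite: your diameter bound is a parenthetical, and as written the inference is incomplete---a single cross-section of area $\ge 1/D$ has perimeter only of order $D^{-1/2}$, which by itself gives nothing; you need either the paper's slice-by-slice integration or a cone argument (take the convex hull of that cross-section with the far endpoint of the diameter: its lateral area is at least $\tfrac12\sqrt{4\pi/D}\cdot(D/2)$, of order $\sqrt{D}$, and surface area is monotone under inclusion of convex bodies) to conclude that area grows like $\sqrt{D}$. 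With that repaired, your argument stands, and you supply exactly what the paper leaves implicit: fixing a combinatorial type via Proposition \ref{types}, vertex/Hausdorff convergence with continuity of volume and area, the uniform local finiteness of the tilings, the diagonal extraction of convergent isometries, and the check that covering and interior-disjointness pass to the limit. The one soft spot there is your assertion that the face-to-face condition is ``closed under Hausdorff convergence'': when vertices collide or faces merge in the limit, face-to-face-ness survives only in the combinatorial/degenerate sense the paper itself adopts (an $n_0$-hedron regarded as an $n$-hedron with subdivided faces), so that claim deserves the same caveat you already make for the limit polyhedron having possibly fewer than $n$ faces; still, this is no less rigorous than the paper's own gloss of this step.
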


The minimizer could be a degenerate $n$-hedron (with fewer than $n$ faces), as we conjecture occurs for $n>14$ (Conj. \ref{best3Dtiles} (10)).

\begin{proof}
Take a sequence of unit-volume convex $n$-hedral tiles with areas approaching the infimum. We may assume that the areas are bounded by $P_0$. By standard compactness results, it suffices to show that the diameters are bounded.

Consider a unit-volume convex polyhedron. Take the slice of largest area $a_0$ perpendicular to the diameter $D$. Consider the pyramid with based $a_0$ and the apex at the most distant end of the diameter. By convexity, the pyramid lies insider the polyhedron. Therefore,
$$
1 \geq \left(\frac{1}{3}\right)a_0 \frac{D}{2}
$$
and
$$
a_0 \leq \frac{6}{D}.
$$
For every slice perpendicular to the diameter, by the isoperimetric inequality, the perimeter $p$ and area $a$ satisfy
$$
p \geq \sqrt{4 \pi a}.
$$
Since $\sqrt{a} \geq a / \sqrt{a_0}$, we have
$$
\sqrt{4 \pi a} \geq \frac{a\sqrt{4 \pi}}{\sqrt{6/D}} = a\sqrt{\frac{2 \pi D}{3}}.
$$
Integrating over all slices, the area becomes the volume which equals 1 and the perimeter-area $P_0$ satisfies
$$
P_0 \geq \sqrt{\frac{2 \pi D}{3}}.
$$
Therefore,
$$
D \leq \frac{3P^2}{2\pi},
$$
as desired.
\end{proof}

\begin{remark}
\emph{In general an area-minimizing $n$-hedral tile need not be unique. Indeed, for $n = 8$, the conjectured gabled rhombohedron and hexagonal prism have the same surface area.}
\end{remark}

\section{Properties of Prisms}
\label{secprism}

In this section, we give some properties of prisms, which are useful in the next section. We begin by giving a definition of prisms. Then we characterize prisms by showing that if a polyhedron has two $n$-gonal bases and $n$ quadrilateral faces, then it must be a prism (Prop. \ref{combinatorial_prism_face_3} and \ref{combinatorial_prism_face_n}). Moreover, we show that a prism with a regular polygonal base uniquely minimizes surface area among all prisms of fixed volume and number of faces and give a way to calculate the surface area and optimal height (Prop. \ref{bestheight}). Lastly, in Proposition \ref{montile}, we relate tiling of the plane with tiling of space in order to prove that a certain hexagonal prism is the surface-area-minimizing prism (Prop. \ref{hexbest}).

\begin{definition}
\emph{A} prism \emph{is a polyhedron consisting of a polygonal planar base, a translation of that base to another plane, and edges between corresponding vertices.}
\end{definition}

\begin{remark}
\emph{Bernd Sturmfels \cite{sturmfels} asked us the following question: given a specific combinatorial type for some $n$-hedron, can we determine whether there exists a tile of that type. We conjecture that the pentagonal pyramid is the combinatorial polyhedron with the fewest faces which does not tile. Wolfram Online \cite{wolfram} remarks that there are no known pentagonal pyramids which tile.}
\end{remark}

The next two propositions characterize when we know that a $n$-hedron must be a combinatorial prism.

\begin{proposition}
\label{combinatorial_prism_face_3}
Let $P$ be a nondegenerate polyhedron with three quadrilateral faces and two triangular faces.  Then $P$ is a combinatorial triangular prism. 
\end{proposition}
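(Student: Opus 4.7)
The plan is to start from basic counting. Let $V$, $E$, $F$ be the vertex, edge, face counts. Since $F=5$ and summing face-edge incidences gives $3\cdot 4 + 2\cdot 3 = 18 = 2E$, we get $E = 9$, and Euler's formula yields $V = 6$. Nondegeneracy forces each vertex to have degree $\ge 3$, and since $\sum \deg(v) = 2E = 18$, every vertex has degree exactly $3$. I will use nondegeneracy in one more way: two distinct faces share at most one edge. (If two faces shared two edges, three vertices would be involved, and we could trim the superfluous edges.)

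Next I would show the two triangles $T_1, T_2$ are edge-disjoint by a parity argument. Every edge is shared by exactly two faces, so edges split into types tri-tri, tri-quad, quad-quad. If $T_1$ and $T_2$ share an edge, the remaining $3+3-2 = 4$ triangle edge-incidences must pair with quadrilateral incidences, using $4$ quad edge-incidences; then $12 - 4 = 8$ quad incidences remain and must pair among themselves, giving $4$ quad-quad edges. Total edges $= 1 + 4 + 4 = 9$, consistent — so I actually need a different argument. Instead: since each triangle has $3$ edges and, by the single-shared-edge rule, must share them with $3$ \emph{distinct} other faces among $\{T_1$ or $T_2, Q_1, Q_2, Q_3\}$, each triangle is adjacent to at least $2$ of the three quads. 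Combined with counting $\sum a_i = 2\cdot 3 = 6$ if triangles are disjoint, or $5$ if they share an edge (using $a_i \le 2$ per quad since each quad meets at most one edge of each triangle), and $\sum b_i$ must be even (twice the number of quad-quad edges), the $6$-$6$ split is the only option. So $T_1$ and $T_2$ are edge-disjoint, and each quad is adjacent to both triangles along exactly one edge apiece.

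Then each quadrilateral has $a_i = 2$ edges shared with triangles and $b_i = 2$ edges shared with other quads. The quad-quad incidence multigraph on $\{Q_1,Q_2,Q_3\}$ is $2$-regular with $3$ edges, and single-sharing forces it to be simple, so it is the triangle $K_3$: each pair $Q_i,Q_j$ shares exactly one edge $g_{ij}$. At this stage the face-adjacency graph matches that of a triangular prism.

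The final step, which I expect to be the main obstacle, is promoting face-adjacency to a combinatorial isomorphism, i.e.\ matching the cyclic edge order within each quadrilateral and exhibiting a bijection between vertices of $T_1$ and $T_2$. For this I would use the degree-$3$ vertex structure: at every vertex exactly three faces meet, so its three incident edges are the pairwise intersections of those three faces. A vertex $v$ of $T_1$ lies on $T_1$ and on exactly two quads $Q_i,Q_j$ (the ones containing the two triangle-edges at $v$); then the third edge at $v$ must be $g_{ij}$. This forces $g_{ij}$ to have one endpoint on $T_1$ (namely $v$) and, symmetrically, its other endpoint on $T_2$. Within each quad $Q_i$, the two triangle-shared edges are then opposite sides (they cannot be adjacent, else a common endpoint would force a second quad at that vertex to equal both $Q_{j},Q_{k}$). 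The map sending each $T_1$-vertex to the $T_2$-vertex reached along the appropriate $g_{ij}$ is the required combinatorial isomorphism with the triangular prism.
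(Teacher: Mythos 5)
Your counting setup ($E=9$, $V=6$, every vertex of degree exactly $3$) agrees with the paper, and your final step, upgrading the face-adjacency pattern to a combinatorial isomorphism by using the three faces meeting at each degree-$3$ vertex, is sound (indeed more explicit than the paper, which simply asserts the conclusion once the triangles are shown disjoint). The genuine gap is at the crucial step: ruling out a shared triangle--triangle edge. Your first parity attempt correctly shows that the edge-type count $1+4+4=9$ is consistent with a shared edge, but the replacement argument rests on a miscount. If $T_1$ and $T_2$ share an edge, then each triangle has exactly two edges left to share with quadrilaterals, so the number of triangle--quad edges is $4$, not $5$; hence $\sum a_i=4$ and $\sum b_i=12-4=8$, which is even, and the parity criterion excludes nothing. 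Consequently the ``$6$--$6$ split is the only option'' conclusion is unjustified, and with it the claim that each quad meets each triangle in exactly one edge, on which everything after that depends.

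A repair in the spirit of your counting does exist: in the shared-edge case there are $4$ quad--quad edges distributed among only $\binom{3}{2}=3$ pairs of quadrilaterals, so some pair of quads would share two edges; if those edges are adjacent you get a degree-$2$ vertex (impossible since all degrees are $3$), but the case of two opposite shared edges needs its own small argument, so this costs more than your parenthetical ``trim the superfluous edges'' suggests. The paper instead argues locally at the shared edge: writing the triangles as $ABC$ and $ABY$, the degree-$3$ condition forces the three faces at $A$ to be the two triangles and a quadrilateral containing edges $AC$ and $AY$, and similarly at $B$; filling in the one remaining face then forces a vertex to have degree greater than $3$, a contradiction. You should replace your parity step with an argument of this local type (or make the pigeonhole repair rigorous); the rest of your proof can then stand as written.
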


\begin{proof}
Since each edge lies on two faces, the total number of edges is 9. By Euler's formula, the number of vertices is 6. Since the sum over the faces of the number of vertices is 18, each vertex must have degree 3. (By the nondegeneracy hypothesis, no vertex can have degree 2.)

Suppose that the triangular faces $\bigtriangleup ABC$ and $\bigtriangleup ABY$ meet. Because each vertex has degree 3, they must share an edge, as in Figure \ref{fig:tprism1}. The other faces at edges $AC$ and $BC$ must be quadrilaterals. Quadrilateral $ACXY$ has vertices $X$ and $Y$, distinct because the prism has degree 3. It follows that the vertex $B$ is not of degree 3, a contradiction. Therefore, the triangular faces are disjoint and the polyhedron is a combinatorial triangular prism, as desired. 
\end{proof}

Proposition \ref{combinatorial_prism_face_n} shows that more generally a nondegenerate polyhedron with $n$ quadrilateral faces and two $n$-gonal faces is a combinatorial $n$-gonal prism. The proof is similar to the proof of Proposition \ref{combinatorial_prism_face_3}.

\begin{proposition}
\label{combinatorial_prism_face_n}
Let $P$ be a nondegenerate polyhedron with $n$ quadrilateral faces and two $n$-gonal faces.  Then $P$ is a combinatorial $n$-gonal prism. 
\end{proposition}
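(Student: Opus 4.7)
The plan is to mirror Proposition~\ref{combinatorial_prism_face_3}, extending the combinatorial analysis to general $n$. First I would use Euler's formula together with the face-size count: since each face of size $k$ contributes $k$ edge-incidences, $2E = 4n + 2n = 6n$, so $E = 3n$ and $V = E - F + 2 = 2n$. The vertex-degree sum is $2E = 6n$, giving average degree exactly three, so nondegeneracy (no degree-two vertices) forces every vertex to have degree exactly three.

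The heart of the argument is to show the two $n$-gonal faces $N_1, N_2$ are vertex-disjoint. Suppose instead they share a vertex $A$. Because the three faces at $A$ include both $N_1$ and $N_2$, and the cyclic arrangement of three faces at a degree-three vertex makes each consecutive pair share one edge, $N_1$ and $N_2$ must share an edge $AB$. Let $X$ (resp.\ $Y$) be the neighbor of $A$ in $N_1$ (resp.\ $N_2$) other than $B$, and $X', Y'$ the analogous neighbors of $B$. The third face at $A$ is a quadrilateral $Q_A = AXZY$ introducing a new vertex $Z$, and the third face at $B$ is a quadrilateral $Q_B = B X' Z' Y'$. Tracing the quadrilateral $F_X$ on the far side of the edge $XX_2$ of $N_1$ (with $X_2$ the further neighbor of $X$) yields $F_X = XX_2UZ$, and the symmetric analysis along $N_2$ gives $F_Y = YY_2UZ$ with the \emph{same} $U$, since the three edges at $Z$ force the two candidates for $U$ to coincide. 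The third face at $U$ must then contain $U, X_2, Y_2$, and the same face, viewed from $X_2$ and $Y_2$, is forced to contain the next $N_1$- and $N_2$-vertices $X_3, Y_3$ as well. For $n \geq 5$ these are four additional distinct vertices, contradicting the fact that the face is a quadrilateral; for $n = 3$ the early collapse instead forces two distinct edges at $Z$ to lie between the same pair of faces $Q_A, Q_B$, impossible at a degree-three vertex. The lone borderline is $n = 4$, where $X_3$ and $Y_3$ both collapse to $B$ and the structure closes consistently into the cube, which is itself the combinatorial $4$-gonal prism, so the conclusion still holds in that case.

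Once $N_1, N_2$ are disjoint they account for all $2n$ vertices; each vertex of $N_1$ has its two boundary edges inside $N_1$ plus one further edge whose other endpoint must lie in $V(N_2)$. These $n$ connecting edges define a bijection $\phi : V(N_1) \to V(N_2)$. Each of the $n$ quadrilaterals shares exactly one edge with $N_1$ (by pigeonhole on the $n$ edges of $N_1$ and the nondegeneracy rule that two faces meet in at most one edge), and tracing such a quadrilateral around its two $N_1$-vertices $v, v'$ forces it to equal $v\,v'\,\phi(v')\,\phi(v)$, so $\phi(v)\phi(v')$ must be an edge of $N_2$. Hence $\phi$ preserves adjacency, which is precisely the combinatorial $n$-gonal prism structure. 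The main obstacle throughout is the exceptional case $n = 4$: the shared-edge configuration does not immediately produce a contradiction but instead assembles into the cube, and recognizing the cube itself as the combinatorial $4$-gonal prism is what lets the argument conclude uniformly.
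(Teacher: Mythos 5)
Your proposal is correct and follows the same skeleton as the paper (Euler count forcing $V=2n$, $E=3n$ and all vertices of degree three; disjointness of the two $n$-gons by contradiction; then the matching and side quadrilaterals assemble the prism), but the heart of the argument, the disjointness step, is executed by a genuinely different mechanism. The paper, after forcing the two $n$-gons to share an edge $cd$ and producing the quadrilaterals at $c$ and $d$ with distinct fourth vertices $x\neq y$, uses the \emph{global} vertex count $2n$ to conclude that the fourth vertex $z$ of the quadrilateral at $b_1$ must be an already-listed vertex, whence $\deg z\geq 4$ -- a short, two-step contradiction. You instead argue purely locally: you trace the two faces $F_X$, $F_Y$ beyond $Z$, use the three edges at $Z$ to force a common fourth vertex $U$, and show the third face at $U$ would need the five distinct vertices $U,X_2,X_3,Y_2,Y_3$ when $n\geq 5$, never invoking the count of vertices; this is longer but sound, and it has the side benefit of uniformly covering $n=3$ (which the paper relegates to Proposition \ref{combinatorial_prism_face_3}). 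For $n=4$ the paper simply observes that all six faces are quadrilaterals, so two vertex-disjoint ones may be chosen as bases, while you let the shared-edge configuration close up and recognize it as the combinatorial cube; both dispositions are fine, though your ``closes consistently into the cube'' is only sketched and deserves the short verification that the eight vertices and six faces are forced. Finally, note that your endgame makes the same silent assumptions as the paper's: that the third edge at each vertex of one base crosses to the other base (no chords within a base's vertex set, which is what makes $\phi$ well defined, just as the paper's ``by permuting the $b_i$'' does), and that two faces meet in at most one edge and at most two vertices (needed for your ``new vertex'' claims for $Z$, $Z'$, $U$, and used explicitly by the paper). So there is no gap beyond what the paper itself takes for granted.
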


\begin{proof}
By the same argument in Proposition \ref{combinatorial_prism_face_3}, we can show that every vertex has degree 3 and that $V=2n$ and $E=3n$.
\newline

\noindent \emph{(Case 1)}: $n=4$.
\newline

Since no vertex can have degree greater than three, it must be the case that two of the faces do not share a vertex. Since the six faces of this polyhedra will be quadrilaterals, we can identify any two faces as bases. 
\newline

\noindent \emph{(Case 2)}: $n \geq 5$
\newline

Suppose that the two $n$-gonal faces meet. If they only share one vertex, then the degree of this vertex is at least four, a contradiction. So they should meet at an edge. Let us call this edge $cd$ and the two $n$-gonal faces $a_1a_2 \dotsc a_{n-2}cd$ and $b_1b_2 \dotsc b_{n-2}cd$. $c$ is contained in the edges $ca_{n-2}$, $cb_{n-2}$, and $cd$. Therefore, there exists a quadrilateral face containing the edges $ca_{n-2}$ and $cb_{n-2}$, namely $ca_{n-2}xb_{n-2}$. Similarly, there exists a vertex $y$ such that $db_1ya_1$ is a face of $P$. If $x=y$, then the degree of $x$ is at least four, a contradiction. So $x$ and $y$ are distinct.

Now note that since $b_1$ is contained in the three edges $b_1d$, $b_1y$, and $b_1b_2$, there exists a face containing the edges $b_1b_2$ and $b_1y$.  This face must be a quadrilateral, so there exists a vertex $z$ such that $b_2b_1yz$ is a face of $P$.  Since there are $2n$ vertices of $P$, $z \in \{a_1, \dotsc ,a_{n-2},b_1, \dotsc ,c_{n-2},c,d,x,y\}$. Moreover, since two faces meet at most at two vertices, $z \in \{b_3, \dotsc ,b_{n-2},x\}$. It follows that $\deg{z}$ is at least four, a contradiction.  Therefore, the two $n$-gonal faces do not share an edge, and it follows that they cannot meet.

We now show that $P$ is a combinatorial $n$-gonal prism. Let $a_1a_2 \dotsc a_n$ be an $n$-gonal face described above. Let the other $n$-gonal face have vertices $b_1,b_2,\dotsc ,b_n$. By permuting the vertices $b_1,b_2,\dotsc, b_n$, we may assume that $a_ib_i$ is an edge of $P$ for each $i=1,2, \dotsc ,n$.  $a_ia_{i+1}$ is contained in a face of $P$ other than $a_1a_2 \dotsc a_n$. Since this face will contain the edges $a_ib_i$ and $a_{i+1}b_{i+1}$, we conclude that $a_ib_ib_{i+1}a_{i+1}$ is a face of $P$. Therefore, $b_ib_{i+1}$ is an edge of $P$.  Hence, $b_1b_2\dotsc b_n$ is a face of $P$. From this map, it is clear $P$ is a combinatorial $n$-gonal prism, as desired.
\end{proof}

\begin{figure}
	\centering
	\includegraphics[scale=0.6]{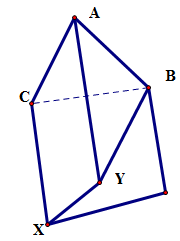}
	\caption{Two triangular faces cannot meet in a nondegenerate polyhedron with three quadrilateral faces and two triangular faces.}
	\label{fig:tprism1}
\end{figure}

The following proposition gives the optimal height for any right regular prism:

\begin{proposition}
\label{bestheight}
The optimal unit-volume prism with a base similar to a region $R$ of area $A_0$ and perimeter $P_0$ is a right prism of height $h=(4\sqrt{A_0}/P_0)^{2/3}$ and surface area $S = 3({P_0^2}/{2A_0})^{1/3}$. If the base is a regular polygon, it uniquely minimizes surface area among all prisms of fixed volume and number of faces.
\end{proposition}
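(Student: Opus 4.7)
The plan is to split the proof into three natural stages: first reduce to right prisms, then optimize the base-height trade-off for a given base shape, and finally invoke the planar isoperimetric inequality for polygons to handle the uniqueness claim.

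For the reduction to right prisms, I would use a cross-product computation on the lateral faces. Let the base lie in a plane $\Pi$ with directed edge vectors $e_1,\ldots,e_k$, and let $v$ be the translation vector defining the prism. Decompose $v = v_\perp + v_\parallel$ into its components normal and parallel to $\Pi$, so that the perpendicular height is $h = |v_\perp|$ and the volume is $A h$ regardless of $v_\parallel$. The $i$-th lateral face is a parallelogram of area $|e_i \times v|$, and since $e_i \times v_\perp$ lies in $\Pi$ while $e_i \times v_\parallel$ is normal to $\Pi$, these components are orthogonal, giving $|e_i \times v|^2 = |e_i|^2 h^2 + |e_i \times v_\parallel|^2 \ge |e_i|^2 h^2$. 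Summing yields lateral area at least $Ph$, with equality forcing $v_\parallel$ parallel to every edge; since the edges of a closed polygon span $\Pi$, this forces $v_\parallel = 0$. Hence for fixed base and volume, the right prism strictly minimizes surface area unless the prism is already right.

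Next, restrict to right prisms whose base is similar to $R$ under a scale factor $s > 0$, so the base has area $s^2 A_0$ and perimeter $s P_0$. The volume constraint $s^2 A_0 h = 1$ eliminates $h$, and the surface area becomes
$$S(s) = 2 s^2 A_0 + \frac{P_0}{s A_0}.$$
This is strictly convex on $(0,\infty)$ with $S(s) \to \infty$ at both endpoints, so its unique critical point is the global minimum. Setting $S'(s) = 0$ gives $s^3 = P_0 / (4 A_0^2)$, from which a routine substitution yields $h = (4 \sqrt{A_0}/P_0)^{2/3}$ and, using $2 s^2 A_0 = \tfrac{1}{2} \cdot P_0/(s A_0)$ at the critical point, $S = 6 s^2 A_0 = 6/h = 3(P_0^2/(2A_0))^{1/3}$.

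For the last sentence, observe that $3(P_0^2/(2A_0))^{1/3}$ is a strictly increasing function of the scale-invariant ratio $P_0^2/A_0$. Among $n$-gons of fixed area, the classical polygonal isoperimetric inequality says the regular $n$-gon uniquely minimizes perimeter, hence uniquely minimizes $P_0^2/A_0$. Combined with Propositions \ref{combinatorial_prism_face_3} and \ref{combinatorial_prism_face_n} (so that a prism with $n+2$ faces has $n$-gonal bases), this gives uniqueness of the right regular $n$-gonal prism of optimal height as the surface-area-minimizing prism with $n+2$ faces.

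The only non-routine step is the reduction from oblique to right prisms; once that cross-product inequality is established, the rest is a one-variable minimization plus a standard isoperimetric fact. I expect the main pitfall to be handling the equality case of the cross-product inequality cleanly, which is why I would state it edge-by-edge and then use the fact that the edge vectors of a closed polygon span the base plane to conclude $v_\parallel = 0$.
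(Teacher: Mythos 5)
Your proposal is correct and follows essentially the same route as the paper: reduce to a right prism (the paper by asserting that shearing increases surface area, you by spelling this out with a cross-product estimate on the lateral faces), then a one-variable calculus minimization in the scale factor, then the planar polygonal isoperimetric inequality for the uniqueness claim. The only quibble is that citing Propositions \ref{combinatorial_prism_face_3} and \ref{combinatorial_prism_face_n} is unnecessary and goes in the wrong direction; that a prism with $n+2$ faces has $n$-gonal bases is immediate from the paper's definition of a prism.
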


\begin{proof}
Since the top is a translation of the bottom, we may assume that both are horizontal. Since shearing a right prism preserves volume but increases surface area, we may assume that our prism is a right prism. A simple calculus computation shows that the optimal right prism has height and surface area as asserted. Since a regular $n$-gon uniquely minimizes perimeter for given area, the right $n$-gonal prism of optimal dimensions uniquely minimizes surface area among all prism of fixed volume and number of faces.
\end{proof}

The next proposition gives an example of how we can relate tiling of the plane with tiling of space. We use Proposition \ref{montile} and Hales' honeycomb theorem \cite[Thm. 1-A]{hales} to prove that the hexagonal prism is the surface-area-minimizing prism.

\begin{proposition}
\label{montile}
Given $n \geq 5$, a monohedral tiling of space by a unit-volume right prisms with $n$ faces is surface-area-minimizing among prisms if and only if the bases are perimeter-minimizing tilings of parallel planes by fixed-area $(n-2)$-gons and the height is optimal as in Proposition \ref{bestheight}.
\end{proposition}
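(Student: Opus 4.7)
The plan is to reduce the three-dimensional optimization to a planar one by applying Proposition \ref{bestheight}. The key structural observation is that a right prism monohedrally tiles space (with all prism axes parallel) if and only if its base monohedrally tiles the plane: the forward direction follows by slicing the space tiling by a plane parallel to the bases, and the reverse by stacking congruent horizontal layers. This identifies the class of feasible right prisms with the class of planar $(n-2)$-gonal tiles.

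Next, for a prescribed base shape similar to a region of area $A_0$ and perimeter $P_0$, Proposition \ref{bestheight} shows that among unit-volume right prisms with such a base the surface area is uniquely minimized at a specific height (and base scale), with minimum value $S = 3(P_0^2/(2A_0))^{1/3}$. Since $P_0^2/A_0$ is scale-invariant, $S$ depends only on the shape of the base. The minimization over tiling right prisms thus decouples into (i) choosing the optimal height from Proposition \ref{bestheight}, and (ii) minimizing $P_0^2/A_0$ over $(n-2)$-gons that tile the plane. After normalizing area, (ii) is equivalent to minimizing perimeter over fixed-area $(n-2)$-gonal planar tiles---precisely the ``perimeter-minimizing tiling'' condition. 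Both directions of the equivalence then follow, since any tiling prism failing (i) or (ii) has strictly larger surface area than the optimum.

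The main obstacle is the first step: showing that a monohedral space tiling by right prisms forces the bases to tile parallel planes. This requires identifying the prism axis direction consistently across the tiling. For $n \geq 5$ with $n \neq 6$, the $(n-2)$-gonal base is combinatorially distinguishable from the rectangular lateral faces, so each prism has a well-defined axis direction; face-to-face adjacency then propagates this direction across the tiling. The case $n = 6$ (cube) is straightforward by direct inspection, as any of the three axis choices yields a planar tiling by squares.
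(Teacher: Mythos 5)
Your proposal is correct and takes essentially the same route as the paper: both arguments hinge on the observation that for $n \neq 6$ the $(n-2)$-gonal bases cannot be matched to quadrilateral lateral faces, so in a face-to-face tiling bases meet bases and the bases tile parallel planes (with the $n=6$ cube case treated separately by inspection). The reduction to the planar perimeter-minimization problem together with the optimal height from Proposition \ref{bestheight} is exactly the paper's argument, which you simply spell out in slightly more detail (scale-invariance of $P_0^2/A_0$, slicing/stacking).
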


\begin{proof}
We claim that bases must match up with bases and sides with sides. For $n \neq 6$, this is trivial. For $n = 6$, the prism is a cube and the claim is even more trivial. Therefore, the bases tile parallel planes. Furthermore, the bases minimize perimeter for fixed area if and only if the prisms minimize perimeter for fixed volume.
\end{proof}

\begin{remark}
\emph{Proposition \ref{montile} assures that the surface-area-minimizing tile which is combinatorial prism of seven faces is the Cairo prism.}
\end{remark}

\begin{proposition}
\label{hexbest}
A right regular hexagonal prism of base length $(2/9)^{1/3}$ and height $2^{1/3}3^{-1/6}$ provides the least-surface area tiling of space by unit-volume prisms. Its surface area is $2^{2/3}3^{7/6}$.
\end{proposition}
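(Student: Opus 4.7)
The plan is to combine Proposition \ref{montile} with Hales' honeycomb theorem. By Proposition \ref{montile}, a surface-area-minimizing tiling of $\R^3$ by unit-volume prisms corresponds (once heights are chosen optimally as in Proposition \ref{bestheight}) to a perimeter-minimizing planar tiling by fixed-area polygonal bases. By Hales' honeycomb theorem, the unique perimeter-minimizing unit-area planar tiling is the regular hexagonal honeycomb, so the optimal prism must be a right regular hexagonal prism; in particular, the minimizer has $n=8$ faces, corresponding to a hexagonal $(n-2)$-gon base.

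Given this identification, the claimed numerical values are extracted directly from Proposition \ref{bestheight}. A regular hexagon of side length $a$ satisfies $A_0 = (3\sqrt{3}/2) a^2$ and $P_0 = 6a$, whence $P_0^2/(2A_0) = 4\sqrt{3}$. Proposition \ref{bestheight} then yields the surface area $S = 3(4\sqrt{3})^{1/3} = 3 \cdot 4^{1/3} \cdot 3^{1/6} = 2^{2/3}3^{7/6}$. To nail down the scale, I impose the unit-volume condition $A_0 h = 1$ together with the optimal-height relation $h = (4\sqrt{A_0}/P_0)^{2/3}$; solving this little system pins $a = (2/9)^{1/3}$ and $h = 2^{1/3}3^{-1/6}$, matching the statement.

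The only delicate step is the black-box invocation of Hales' honeycomb theorem, which is a deep result but is stated and cited earlier. Once it is granted, everything else is bookkeeping with Propositions \ref{montile} and \ref{bestheight}, so no additional technical obstacle should arise.
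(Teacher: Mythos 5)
Your proposal is correct and follows essentially the same route as the paper: Hales' honeycomb theorem plus Proposition \ref{montile} identify the right regular hexagonal prism, and Proposition \ref{bestheight} supplies the optimal height and surface area. The only difference is that you carry out explicitly the computation of $a=(2/9)^{1/3}$, $h=2^{1/3}3^{-1/6}$, and $S=2^{2/3}3^{7/6}$, which the paper leaves implicit; your numbers check out.
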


\begin{proof}
Hales' honeycomb theorem \cite[Thm. 1-A]{hales} says that a regular hexagon provides the least-perimeter way to tile the plane into equal parts. By Proposition \ref{montile}, a regular hexagonal prism is the least-surface-area way to tile space by equal volume prisms. The best right regular hexagonal prism has height given by Proposition \ref{bestheight}. Since the base length of a unit-volume right regular hexagonal prism is determined by its height, we have the desired result. 
\end{proof}

\section{The surface-area-minimizing tetrahedron and 5-hedron tiles}
\label{5and4hedra}

The regular tetrahedron is the surface-area-minimizing tetrahedron by Theorem \ref{Florianpf}, but, unfortunately, does not tile space (Fig. \ref{fig:notiletetra}). While the problem of tetrahedral tilings has been considered in the literature, there does not seem to be a discussion of \textit{surface-area-minimizing} tetrahedral tiles. In this section, we use Sommerville's classification of space-filing tetrahedra to find the surface-area-minimizing tetrahedron. However, we are unable to remove the orientation-preserving assumption. We first define an orientation-preserving tiling as follows:

\begin{definition}
\label{propertiling}
\emph{A tiling is} orientation preserving \emph{if any two tiles are equivalent under an orientation-preserving isometry of $\R^3.$}
\end{definition}

Sommerville \cite[p.57]{somville} describes four types of tetrahedral tiles and claims that, "in addition to these four, no tetrahedral tiles exist in euclidean space". Edmonds \cite{edmonds} addresses some concerns about Sommerville's proof and proves that Sommerville's four candidates are indeed the only four face-to-face, orientation-preserving tiles. The No. 1 tetrahedron is given by cutting a triangular prism into three (See Fig. \ref{fig:tetraprism}). The No. 2 tetrahedron is given by cutting No. 1 or cutting No. 3 in half (Fig. \ref{fig:tetra2}). The No. 3 tetrahedron is given by cutting a square pyramid in half across the diagonal of the base (Fig. \ref{fig:tetra3}). This means No. 3 is 1/12 a cube. Note that No. 3 was incorrectly suggested  by Li et al. \cite{g10} as a surface-area-minimizing tetrahedral tile. Lastly, the No. 4 tetrahedron is given by cutting No. 1 into 4 (Fig. \ref{fig:tetra4}).

\begin{figure}
	\centering
    \includegraphics[scale=0.7]{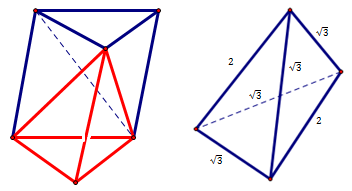}
	\caption{The tetrahedron (Sommerville No. 1) formed by four isosceles right triangles with two sides of $\sqrt{3}$ and one side of 2 minimizes surface area among all orientation-preserving tetrahedral tiles \cite[Fig. 7]{somville}.}
	\label{fig:tetraprism}
	\includegraphics[scale=0.7]{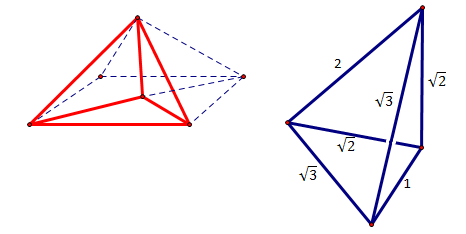}
	\caption{No. 2 tetrahedron is given by cutting No. 3 in half. \cite[Fig. 8]{somville}.}
	\label{fig:tetra2}
	\includegraphics[scale=0.7]{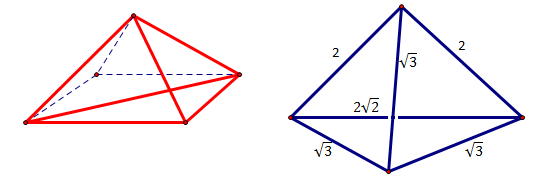}
	\caption{No. 3 tetrahedron is given by cutting a square pyramid into two. \cite[Fig. 9]{somville}.}
	\label{fig:tetra3}
	\includegraphics[scale=0.7]{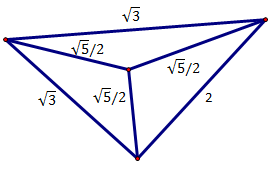}
	\caption{No. 4 tetrahedron is given by cutting No. 1 into 4. \cite[Fig. 10]{somville}.}
	\label{fig:tetra4}
\end{figure}

Goldberg \cite{goldbergtetra} considered more general tetrahedral tilings (which are not face-to-face) and found infinitely many families of them. Edmonds does not consider tilings which are not orientation-preserving. Further investigation is needed regarding what is known about nonorientation-preserving tilings, and whether the orientation-preserving hypothesis can be removed from the Theorem \ref{besttetra}. 

Marjorie Senechal \cite{senechal} provides an excellent survey on tetrahedral tiles. Senechal explains that Sommerville's initial consideration of this question goes back to an error made by a student. The student stated that three tetrahedra which divide a triangular prism are congruent, though he meant equal volume. This prompted Sommerville's initial study of congruent tetrahedra which tile space. Senechal points out that Sommerville seems to consider only orientation-preserving, face-to-face tetrahedral tilings, and she stresses the need for more consideration of the problem.

We now proceed to show that the No. 1 tetrahedron provides the optimal orientation-preserving tetrahedral tiling of space.

\begin{theorem}
\label{besttetra}
Let $T$ be the No. 1 tetrahedron formed by four isosceles right triangles with two sides of $\sqrt{3}$ and one side of 2 (Fig. \ref{fig:tetraprism}). Then $T$ provides the least-surface-area unit-volume orientation-preserving tetrahedral tiling.
\end{theorem}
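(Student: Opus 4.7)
The plan is to combine two ingredients. First, by Edmonds's refinement of Sommerville's classification \cite{somville, edmonds}, the orientation-preserving, face-to-face tetrahedral tiles of $\R^3$ are exactly Sommerville's four types, Nos.~1--4. Second, since the ratio $A^3/V^2$ is scale invariant, the unit-volume surface area of any polyhedron is determined by $A^3/V^2$ (it equals $(A^3/V^2)^{1/3}$ when $V=1$), so Theorem \ref{besttetra} reduces to a finite comparison: one must show that No.~1 has the strictly smallest value of $A^3/V^2$ among the four Sommerville tetrahedra.

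For each candidate I would assign coordinates using the construction already described in the excerpt. For No.~1, place a right equilateral-triangular prism in $\R^3$ and take one of the three congruent pieces produced by the standard pair of diagonal cuts; the four edge lengths and triangle areas can be read off directly. For No.~3, place a right square pyramid with apex over a corner of a unit cube so that the pyramid is one sixth of the cube, then cut along a base diagonal to obtain $1/12$ of the cube. The hinge constructions for No.~2 (half of No.~3) and No.~4 (a quarter of No.~1) then determine their dimensions from those of Nos.~3 and 1 respectively, and the extra interior face produced by each such cut can be identified explicitly.

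With volumes computed via $|\det|/6$ applied to triples of edge vectors and surface areas computed as the sum of the four triangle areas, the evaluation of $A^3/V^2$ for each tetrahedron is elementary. The value for No.~1 matches the entry $7.4126$ in Table \ref{tab:poly}, and the corresponding values for Nos.~2, 3, and 4 come out strictly larger; combined with the Sommerville--Edmonds exhaustiveness, this finishes the proof.

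The main obstacle is essentially careful bookkeeping rather than any deep idea: one must record edge lengths for each tetrahedron correctly, sum face areas over all four faces without double-counting shared features when Nos.~2 and 4 are built as sub-pieces of Nos.~3 and 1, and invoke the orientation-preserving hypothesis precisely where it is needed, namely to apply Edmonds's classification and thereby exclude the non-face-to-face tetrahedral tilings of Goldberg \cite{goldbergtetra} as well as tilings equivalent only under indirect isometries. Once the four ratios are in hand, the comparison itself is immediate.
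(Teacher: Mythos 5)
Your proposal follows essentially the same route as the paper: reduce via the Sommerville--Edmonds classification to the four face-to-face, orientation-preserving candidates, then make a finite comparison of unit-volume surface areas (equivalently of the scale-invariant ratio $A^3/V^2$), with No.~1 coming out smallest at $7.4126$; the paper does exactly this, computing from Sommerville's tabulated edge lengths and dihedral angles rather than from explicit coordinates. One small slip to fix in your coordinate bookkeeping: the square pyramid whose half gives No.~3 is one sixth of the cube with apex at the cube's \emph{center} (projecting to the center of a face), not a pyramid with apex over a corner, though this detail does not change the structure of the argument.
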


\begin{proof}
Since Sommerville provides edge lengths and dihedral angles for each of the four types, we scaled the various tetrahedra to unit volume and calculated the surface area of each. The four types had surface areas of $7.4126, 7.9635, 8.1802,$ and $10.3646$ (to four decimal places), respectively. Thus, $T$ is the surface-area-minimizing orientation-preserving tetrahedral tile.
\end{proof}

\begin{remark}
\label{sumofdihedral}
\emph{For all prisms, the sum of all dihedral angles is a multiple of 360. This does not hold for every polyhedron that tiles $\R^3$, as shown by Sommerville's tetrahedra (as seen in \ref{besttetra}).}
\end{remark}

Although Conjecture \ref{best3Dtiles}(2) for $n=5$ is well known, there seems to be no nice proof in the literature. The more specific problem of tiling space with prisms was put forth by Steiner (\cite{Steiner2}; see \cite[p. 209]{florian}) who conjectured that a right prism with a regular polygonal base was surface area minimizing among all combinatorial prisms. Steinitz apparently proved the conjecture for triangular prisms but the result was never published (see \cite[p. 209]{florian}). Brass, Moser, and Pach \cite{disgeo} assert that the optimal $n$-hedron is known for $n \leq 7$ but do not provide candidates, though they do reference Goldberg \cite{goldberg}. Goldberg says that the optimal candidate among 5-hedra is known, but offers no proof or specific reference in his paper. We are happy to add our proof and Corollary \ref{triprismtile} to the literature.

Earlier, Sucksdorff \cite{french} gave a proof which Florian \cite[p. 211]{florian} calls "very troublesome". Sucksdorff first eliminates other combinatorial types by noting that the well-known best representative, a square pyramid, has more surface area than the optimal triangular prism. Then follow eighteen pages of algebraic and trigonometric inequalities to show that the right equilateral triangular prism of optimal height minimizes surface area in its combinatorial type. The editor, M. Catalan, appends a note that Sucksdorff's conclusion agrees with the theorem published by Lindel\"{o}f \cite{lind} twelve years later, of which Sucksdorff was apparently unaware. The editor had heard of the result somewhere, from "Mr. Steiner, I believe." We thank Bill Dunbar for help reading the original French.

Our proof of the least surface area 5-hedron begins by first showing that the faces characterize a combinatorial triangular prism (Prop. \ref{combinatorial_prism_face_3}). Then we show that a polyhedron with five faces is combinatorially equivalent to a square pyramid or a triangular prism (Prop. \ref{fivefaceopt}). Furthermore, we prove that the square pyramid is the least-surface-area combinatorial pyramid (Prop. \ref{square-pyramid}) and find a triangular prism that has less surface area than the square pyramid (Prop. \ref{optprism}). Therefore, the best 5-hedron must be a combinatorial triangular prism. By computation, we eliminated non-convex 5-hedra. Therefore, the most efficient must be convex. Finally, using Lindel\"{o}f's Theorem (Thm. \ref{linde}), we show that the 5-hedron with the least surface area is the right equilateral triangular prism (Thm. \ref{bestfivepoly}).

In section \ref{secprism}, we gave the following proposition, which shows that faces characterize a combinatorial triangular prism.
\newline
\newline
\textbf{Proposition \ref{combinatorial_prism_face_3}.} \emph{Let P be a nondegenerate polyhedron with three quadrilateral faces and two triangular faces.  Then $P$ is a combinatorial triangular prism.}
\newline

We now show a nondegenerate polyhedron with five faces is combinatorially equivalent to a square pyramid or a triangular prism by using Euler's formula to limit the number of possible combinations of quadrilateral and triangular faces to three. Then we show one case is impossible and apply Proposition \ref{combinatorial_prism_face_3} to complete the proof.

\begin{proposition}
\label{fivefaceopt}
A nondegenerate polyhedron with five faces is combinatorially equivalent to a square pyramid or a triangular prism.
\end{proposition}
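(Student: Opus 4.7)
The plan is to use Euler's formula together with the bound on face sizes from Proposition \ref{types} to enumerate the possible face-type combinations, and then rule out or identify each one. First I would observe that every face must be a triangle or a quadrilateral: by the proof of Proposition \ref{types}, faces of a 5-hedron can have at most four edges. Letting $t$ and $q$ denote the number of triangular and quadrilateral faces respectively, I have $t+q=5$, and counting incidences gives $2E=3t+4q = 15+q$, so $q$ must be odd. This leaves exactly three cases: $(t,q)\in\{(4,1),(2,3),(0,5)\}$, with corresponding $(V,E) = (5,8),(6,9),(7,10)$ by Euler's formula $V-E+F=2$.

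Next I would dispose of the case $(t,q)=(0,5)$ by a degree-count argument. The sum of vertex degrees is $2E=20$, distributed over $7$ vertices, giving an average below $3$. Since a nondegenerate polyhedron has every vertex of degree at least $3$, this case is impossible. The case $(t,q)=(2,3)$ is immediately handled by invoking Proposition \ref{combinatorial_prism_face_3}, which already shows $P$ is a combinatorial triangular prism.

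The remaining case $(t,q)=(1,4)$ needs to be identified with the square pyramid. Here I would argue that all four triangular faces must share a common vertex. The key observation is that if some triangle were formed from three vertices of the unique quadrilateral face $ABCD$, then one of its edges would be a diagonal of $ABCD$, contradicting planarity of the quadrilateral face (or violating the face-to-face edge/face structure). Hence each triangle uses the fifth vertex $E$ together with an edge of $ABCD$, forcing the four triangles to be $ABE,BCE,CDE,DAE$. A straightforward bijection then exhibits combinatorial equivalence with the square pyramid.

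I expect the only step requiring care to be the $(t,q)=(1,4)$ case: the verification that no triangular face can lie entirely inside the boundary of the quadrilateral. This is a small combinatorial/geometric point (the incompatibility between a diagonal being an edge of one face and being internal to another), but it is the one place where a short honest argument is needed rather than pure bookkeeping.
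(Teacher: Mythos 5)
Your proposal is correct and follows essentially the same route as the paper: Euler's formula plus the parity of $2E=3t+4q$ to reduce to the cases $(4,1)$, $(2,3)$, $(0,5)$, a pigeonhole degree count to kill $(0,5)$, and Proposition \ref{combinatorial_prism_face_3} for $(2,3)$; your only addition is spelling out the $(4,1)$ case (that every triangle must contain the fifth vertex), which the paper dismisses with ``it easily follows.'' Note the label $(t,q)=(1,4)$ in your last two paragraphs is a typo for $(4,1)$.
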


\begin{proof}
Because $P$ has five faces and is nondegenerate, each face is either a triangle or a quadrilateral. Let $a$ be the number of triangular faces and $b$ be the number of quadrilateral faces. Since $P$ has five faces, we have $a+b=5$. Let $V$ be the number of vertices of $P$ and $E$ be the number of edges of $P$. By Euler's formula, we have $V-E+5=2$. By calculating the sum of the number of edges of each face of $P$, we have $2E=3a+4b$. Therefore, $a$ is even.
\newline

\noindent\textit{(Case 1):} $a=0$ and $b=5$.
\newline

From the above formulas, we have $V=7$ and $E=10$. By counting the number of edges from each vertex, we have that the sum of degrees of vertices of $P$ is $2E=20$. By the Pigeonhole principle, there exists a vertex which has degree less than or equal to $20/7$. Since every degree is at least three, we get a contradiction.
\newline

\noindent\textit{(Case 2):} $a=2$ and $b=3$.
\newline

By Proposition \ref{combinatorial_prism_face_3}, $P$ is a combinatorial triangular prism.
\newline

\noindent\textit{(Case 3):} $a=4$ and $b=1$.
\newline

From the above formulas, we have $V=5$ and it easily follows that $P$ is a quadrilateral pyramid. Therefore, we have shown that $P$ is either a combinatorial triangular prism or quadrilateral pyramid.
\end{proof}

Next, we give a lower bound on the surface area of a given pyramid and use it to show that the quadrilateral pyramid with a square base has the least surface area of among quadrilateral pyramids. 

\begin{lemma}
\label{side-surface}
Let $P$ be a pyramid with apex $V$, base $A_1A_2...A_n$ and height $h$. Suppose that the base has area $S$ and perimeter $p$, then the sum of the areas of side faces of $P$ is greater than or equal to $(1/2)\sqrt{(2S)^2+p^2h^2}$. Equality holds if and only if the base is circumscribed about a circle and the foot of the perpendicular line from $V$ to the base is the center of the circumscribing sphere.
\end{lemma}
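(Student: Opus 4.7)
The plan is to express each lateral-face area as a length of a planar vector and then apply the triangle inequality (Minkowski) for sums of such vectors.

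First, let $V'$ denote the foot of the perpendicular from $V$ to the base plane, so that $|VV'| = h$. For each $i$, let $a_i = |A_iA_{i+1}|$ and let $d_i$ denote the perpendicular distance in the base plane from $V'$ to the line through $A_iA_{i+1}$. By the Pythagorean theorem, the slant distance from $V$ to this line equals $\sqrt{d_i^2 + h^2}$, and hence the area of the lateral triangle $VA_iA_{i+1}$ equals
\[
\tfrac{1}{2}a_i\sqrt{d_i^2+h^2} \;=\; \tfrac{1}{2}\sqrt{(a_id_i)^2+(a_ih)^2}.
\]

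Second, I would sum these expressions and apply the triangle inequality to the planar vectors $w_i = (a_id_i,\, a_ih) \in \R^2$:
\[
\sum_{i=1}^n \tfrac{1}{2}\sqrt{(a_id_i)^2+(a_ih)^2} \;=\; \tfrac{1}{2}\sum_{i=1}^n \|w_i\| \;\geq\; \tfrac{1}{2}\Bigl\|\sum_{i=1}^n w_i\Bigr\| \;=\; \tfrac{1}{2}\sqrt{\Bigl(\sum_i a_id_i\Bigr)^2+(ph)^2},
\]
using $\sum_i a_i = p$. Finally, I would compare $\sum_i a_id_i$ with $2S$: the quantity $\tfrac{1}{2}a_id_i$ is the area of the triangle $V'A_iA_{i+1}$, and the signed version of this sum equals $S$ when $V'$ lies inside the base. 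Taking unsigned distances (as required for the area formula of the lateral faces) yields $\sum_i a_id_i \geq 2S$, with equality exactly when $V'$ lies in the closed base polygon. Chaining the two inequalities gives the desired bound.

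For equality, the Minkowski step forces all vectors $w_i$ to be positively parallel, i.e.\ $d_i/h$ is the same constant for every $i$; since $h>0$, this means $d_1=d_2=\cdots=d_n$, so $V'$ is equidistant from every side of the base, i.e.\ it is the incenter and the base is tangent to a common inscribed circle. Combined with the equality $\sum_i a_id_i = 2S$, which demands $V'$ lies inside the base, this yields precisely the stated equality condition (where ``circumscribing sphere'' evidently means the inscribed circle of the base).

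The main obstacle is a bookkeeping one rather than a conceptual one: I have to make sure that the $d_i$ are taken as unsigned perpendicular distances when computing the lateral-face areas, and then to recover the base area $S$ from these unsigned quantities I need the inequality $\sum_i a_id_i\geq 2S$ rather than an equality. Verifying that equality in this step requires $V'$ to lie inside the base, and combining that with the parallel-vectors condition from Minkowski, is the only subtle point in pinning down when equality holds.
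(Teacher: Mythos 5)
Your proposal is correct and follows essentially the same route as the paper's own proof: write each lateral face's area as $\tfrac12\sqrt{(a_id_i)^2+(a_ih)^2}$, apply the triangle (Minkowski) inequality to sum these planar vectors, and then bound $\sum_i a_id_i \geq 2S$ via the signed-area decomposition about the foot of the altitude. Your treatment of the equality case (parallel vectors forcing equal $d_i$, hence an inscribed circle centered at the foot) is in fact spelled out more carefully than in the paper, which leaves that verification to the reader.
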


\begin{proof}
Let $B$ be the foot of the perpendicular line from $V$ to the base.  Let $a_1,a_2,...,a_n$ be the lengths of the sides of the base.  Let $x_1,x_2,...,x_n$ be the distances from $B$ to the sides of the base.  Then we have $\sum_i \pm a_ix_i=2S$. This implies that $\sum_i a_ix_i\geq2S$. Equality holds when $B$ lies in the interior of the base.
The sum of areas of side faces of $P$ is given by
$$
\frac{1}{2}\sum_i a_i\sqrt{x_i^2+h^2}=\frac{1}{2}\sum_i \sqrt{\left(a_ix_i\right)^2+ \left(a_ih\right)^2}.
$$
By the triangle inequality,
$$
\sum_i \sqrt{\left(a_ix_i \right)^2+ \left(a_ih \right)^2}\geq \sqrt{\left(\sum_i a_ix_i\right)^2+ \left(\sum_i a_ih\right)^2}.
$$
Together with the inequality $\sum_i a_ix_i\geq2S$, we get the desired inequality. It is easy to verify the equality condition.
\end{proof}

\begin{proposition}
\label{square-pyramid}
Let $P$ be a unit-volume quadrilateral pyramid.  Then the surface area of $P$ is greater than or equal to $2^{5/3}3^{2/3}$. Equality holds if and only if it is a right regular pyramid with base-length $2^{-1/3}3^{2/3}$ and height $2^{2/3}3^{-1/3}$.
\end{proposition}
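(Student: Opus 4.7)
The plan is to bound the total surface area from below by combining Lemma \ref{side-surface} with the planar isoperimetric inequality for quadrilaterals, reduce to a one-variable optimization via the volume constraint, and then track the equality conditions.

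First, I would apply Lemma \ref{side-surface} to the pyramid $P$: letting $S$, $p$, $h$ be the base area, base perimeter, and height respectively, the sum of the four side-face areas is at least $(1/2)\sqrt{4S^2 + p^2 h^2}$, with equality iff the base admits an inscribed circle centered at the foot of the perpendicular from the apex. Adding the base contributes $S$, so
\[ A(P) \;\geq\; S + \tfrac{1}{2}\sqrt{4S^2 + p^2 h^2}. \]

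Next, I would invoke the planar isoperimetric inequality for quadrilaterals: the square uniquely minimizes perimeter among quadrilaterals of fixed area, so $p^2 \geq 16 S$, with equality iff the base is a square. Combined with the volume constraint $Sh/3 = 1$, i.e.\ $h = 3/S$, this reduces the lower bound to the single-variable function
\[ f(S) \;=\; S + \sqrt{S^2 + 36/S}. \]

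Then I would minimize $f$ by elementary calculus: setting $f'(S) = 0$, isolating the square root, and squaring once produces $S^3 = 9/2$, so the unique critical point is $S_* = (9/2)^{1/3}$. A direct substitution gives $f(S_*) = 2^{5/3} 3^{2/3}$; since $f \to \infty$ both as $S \to 0^+$ and as $S \to \infty$, this is the global minimum. Equality throughout forces the base to be a square, the apex to lie above its center, and $S = S_*$, $h = 3/S_*$, which is the asserted right regular square pyramid.

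The main technical point will be the calculus step: after squaring one must verify that the extraneous sign was not introduced (i.e.\ $36/S_*^2 > 2 S_*$, equivalent to $S_*^3 < 18$, which holds since $S_*^3 = 9/2$), and the algebraic simplification of $f(S_*)$ to the clean form $2^{5/3} 3^{2/3}$ requires careful bookkeeping. Beyond that, each of the three inequalities (Lemma \ref{side-surface}, the quadrilateral isoperimetric inequality, and the one-variable optimization) has a clean equality condition whose conjunction gives precisely the claimed uniqueness.
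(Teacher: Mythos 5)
Your proposal is correct and follows essentially the same route as the paper: Lemma \ref{side-surface}, the quadrilateral isoperimetric bound $p^2\ge 16S$, and the volume constraint $Sh=3$ reduce the problem to minimizing $S+\sqrt{S^2+36/S}$, which the paper finishes by rearranging and applying AM--GM while you finish with one-variable calculus; both give the same minimum $2^{5/3}3^{2/3}$ at $S=(9/2)^{1/3}$. One caveat on your last line: your equality case has base \emph{area} $S_*=(9/2)^{1/3}=2^{-1/3}3^{2/3}$, hence base-length $2^{-1/6}3^{1/3}$ and height $6^{1/3}$, so it is not literally ``the asserted'' pyramid of the statement --- your numbers are the correct ones, and the dimensions printed in the proposition appear to mislabel the optimal base area as the base-length (with the height computed from that mislabeled value), since a unit-volume right square pyramid with base-length $2^{-1/3}3^{2/3}$ and height $2^{2/3}3^{-1/3}$ has surface area about $7.27>2^{5/3}3^{2/3}\approx 6.60$.
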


\begin{proof}
Let $S$ be the area and $p$ be the perimeter of the base of $P$.  Let $h$ be the height of $P$. Since $P$ has unit volume, we have $Sh=3$. Moreover, for given perimeter, the square is the area maximizer among quadrilaterals. Therefore, $p\geq 4\sqrt{S}$.
From Lemma \ref{side-surface}, the surface area of $P$ is greater than or equal to 
$$
S+\frac{1}{2}\sqrt{(2S)^2+p^2h^2}=S+\frac{1}{2}\sqrt{(2S)^2+\frac{9p^2}{S^2}}.
$$
Furthermore, we have the following inequalities:
$$
S+\frac{1}{2}\sqrt{(2S)^2+\frac{9p^2}{S^2}}\geq S+\frac{1}{2}\sqrt{(2S)^2+\frac{9(16S)}{S^2}}
=S+\sqrt{S^2+\frac{36}{S}}.
$$
Therefore, it suffices to show that 
$$
S+\sqrt{S^2+\frac{36}{S}}\geq 2^{5/3}3^{2/3}
$$
or equivalently that
$$
S^2+\frac{36}{S} \geq \left(2^{5/3}3^{2/3}-S \right)^2.
$$
By direct calculation, this is equivalent to 
$2^{8/3}3^{2/3}S+36/S \geq 2^{10/3}3^{4/3}$.
This follows directly from AM-GM inequality.
It is easy to check the equality condition from the equality condition of AM-GM inequality and Lemma \ref{side-surface}. 
\end{proof}

Proposition \ref{optprism} shows that a triangular prism has less surface area than the square pyramid. Therefore, it has less surface area than any unit-volume quadrilateral pyramid. It follows that the optimal 5-hedral tile must be a combinatorial triangular prism.

\begin{proposition}
\label{optprism}
Let $P$ be the unit-volume right equilateral-triangular prism circumscribed about a sphere and $Q$ be a unit-volume quadrilateral pyramid. Then $P$ has less surface area than $Q$.
\end{proposition}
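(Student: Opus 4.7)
The plan is to lower-bound the surface area of $Q$ using the already-proved Proposition~\ref{square-pyramid}, compute the surface area of $P$ exactly, and compare the two closed-form expressions. Proposition~\ref{square-pyramid} gives the clean bound $S(Q) \geq 2^{5/3}\,3^{2/3} = 288^{1/3}$, so the entire argument reduces to a single inequality of surds once $S(P)$ is in hand.

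First I would pin down the dimensions of $P$. The insphere condition forces $r = h/2$ (tangency to top and bottom) and $r = s/(2\sqrt{3})$ (tangency to each rectangular face, with $s$ the side of the base), so $h = s/\sqrt{3}$. Imposing unit volume $\frac{\sqrt{3}}{4}s^2 h = 1$ gives $s = 4^{1/3}$ and $h = 4^{1/3}/\sqrt{3}$. It is worth remarking at this point that this same height is exactly the optimal one predicted by Proposition~\ref{bestheight} for an equilateral-triangular base, so $P$ is in fact the surface-area-minimizer among right equilateral-triangular prisms of unit volume; this gives a conceptual reason to expect the inequality to hold.

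Next I would compute $S(P)$ directly, or equivalently apply Proposition~\ref{bestheight} with $A_0 = \sqrt{3}s^2/4$ and $P_0 = 3s$, yielding $P_0^2/(2A_0) = 6\sqrt{3}$ and
\[
S(P) \;=\; 3\bigl(6\sqrt{3}\bigr)^{1/3} \;=\; 2^{1/3}\,3^{3/2}.
\]
The remaining step is to verify $2^{1/3}\,3^{3/2} < 2^{5/3}\,3^{2/3}$. Dividing and taking sixth powers, this is equivalent to $3^{5} < 2^{8}$, i.e.\ $243 < 256$, which is immediate.

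The main obstacle is not really an obstacle: the work has been done in Proposition~\ref{square-pyramid}, and what remains is a short computation plus the numerical check $243<256$. The only thing one has to be a little careful about is that the specific prism $P$ in the statement (circumscribed about a sphere) coincides with the optimal right equilateral-triangular prism, so that one can either invoke Proposition~\ref{bestheight} or carry out the one-variable calculus directly; either route reaches $S(P) = 2^{1/3}\,3^{3/2}$ in a few lines.
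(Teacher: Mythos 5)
Your proposal is correct and follows essentially the same route as the paper: compute the dimensions and surface area $2^{1/3}3^{3/2}$ of the circumscribed prism and compare with the lower bound $2^{5/3}3^{2/3}$ from Proposition~\ref{square-pyramid}. You merely make explicit the ``direct computation'' and the final comparison ($3^5 = 243 < 256 = 2^8$) that the paper leaves implicit.
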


\begin{proof}
By direct computation, we have that $P$ has base-length $4^{1/3}$ and height $4^{1/3}3^{-1/2}$. $P$ has surface area $2^{1/3}3^{3/2}$. Therefore, by Proposition \ref{square-pyramid}, the triangular prism has less surface area than any unit-volume quadrilateral pyramid.
\end{proof}

Before, we proceed to the main theorem, we use a linear algebra argument to show that the edges of the sides of a triangular prism are either parallel or concur at a point. We then use this lemma in a our main theorem.

\begin{lemma}
\label{combinatorial_triangular_prism_classification}
Let $ABC-DEF$ be a combinatorial triangular prism such that $ABC$ and $DEF$ are triangular faces. Then the lines $AD$, $BE$, and $CF$ are either parallel to each other or concur at a point (Fig. \ref{fig:prismlines}).
\end{lemma}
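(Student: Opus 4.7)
The plan is to reduce the statement to the classical fact that three planes in $\R^3$ meet pairwise in three lines that are either concurrent or mutually parallel. The three lateral faces of the combinatorial prism $ABC\text{-}DEF$ are the quadrilaterals $ABED$, $BCFE$, $CADF$; each of them is a \emph{planar} face, so it determines a plane, call these $\pi_1,\pi_2,\pi_3$ respectively. Since the prism is nondegenerate, the three planes are pairwise distinct, so each pair meets in a single line. By inspection
\[
\pi_1\cap\pi_3 = \overline{AD},\qquad \pi_1\cap\pi_2 = \overline{BE},\qquad \pi_2\cap\pi_3 = \overline{CF},
\]
because each of these intersection lines contains the two common vertices of the two adjacent faces and two distinct points determine a line. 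Consequently any two of the lines $AD,BE,CF$ lie in a common one of the three planes, hence are either parallel or meet at a single point.

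Next I would split into two cases. In the \emph{concurrent case}, suppose $AD$ and $BE$ meet at a point $P$. Then $P\in\overline{AD}\subset\pi_3$ and $P\in\overline{BE}\subset\pi_2$, so $P\in\pi_2\cap\pi_3=\overline{CF}$; therefore $AD$, $BE$, $CF$ all pass through $P$. In the \emph{parallel case}, suppose instead $AD\parallel BE$ with $AD\neq BE$. If $CF$ met $BE$ at some point $Q$, then $Q\in\overline{BE}\subset\pi_1$ and $Q\in\overline{CF}\subset\pi_3$, so $Q\in\pi_1\cap\pi_3=\overline{AD}$, contradicting $AD\parallel BE$. Hence $CF\parallel BE$, and by the symmetric argument $CF\parallel AD$, so all three lines are mutually parallel.

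I do not expect a serious obstacle: the only point requiring care is verifying that the three lateral planes are genuinely distinct and that each pairwise intersection is precisely the corresponding lateral edge line, both of which follow from the nondegeneracy of the prism. The phrasing in the excerpt hints at a linear-algebra packaging (writing each pair of opposite lateral edges as coplanar and invoking a rank condition on the associated direction vectors), but the synthetic planes-intersecting argument above accomplishes the same thing more transparently and can be translated into the vectorial language after the fact if desired.
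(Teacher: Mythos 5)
Your proof is correct, and it takes a genuinely different route from the paper. The paper argues vectorially: it places $ABC$ in the plane $z=0$, chooses direction vectors $v_1,v_2,v_3$ along $AD$, $BE$, $CF$ normalized to have $z$-coordinate $1$, and splits on the dimension of their span, using planarity of the quadrilateral faces to write $\overrightarrow{BC}=\alpha_1(v_2-v_3)$ etc., ruling out dimension $2$ and extracting the common point $A+\alpha v_1$ in the dimension-$3$ case. You instead use pure incidence geometry: the three lateral face planes are pairwise distinct (by nondegeneracy, since coplanar adjacent faces would make the shared edge unnecessary), each pair meets exactly in the line through their two shared vertices, and the classical three-planes argument then forces the three edge lines to be either concurrent or mutually parallel; coplanarity of each pair of edge lines within a lateral face plane rules out skewness, and your parallel-case contradiction plays the role of the paper's impossible $\dim V=2$ case. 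Your version is shorter and coordinate-free, and makes the dichotomy conceptually transparent; the paper's computation has the minor advantage of exhibiting the concurrency point explicitly and of matching the vectorial setup reused in the proof of Theorem \ref{bestfivepoly}. The only points needing care in your argument are exactly the ones you flag: distinctness of the lateral planes and distinctness of the edge lines (e.g.\ $AD\neq BE$), both of which follow from nondegeneracy of the faces, so there is no gap.
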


\begin{proof}
Imagine the prism $ABC-DEF$ is placed in an Euclidean space such that $ABC$ lies in the plane $z=0$. Pick vectors $v_1$, $v_2$ and $v_3$ such that they are parallel to $\overrightarrow{AD}$, $\overrightarrow{BE}$ and $\overrightarrow{CF}$, respectively and they all have $z$ coordinate 1. Consider the vector space $V$ spanned by the vectors $v_1$, $v_2$ and $v_3$.
\newline

\noindent\textit{(Case 1):} $\dim(V)=1$.
\newline

$v_1,v_2$ and $v_3$ are the same.  Therefore $AD$, $BE$ and $CF$ are parallel to each other, as desired.
\newline

\noindent\textit{(Case 2):} $\dim(V)=2$.
\newline

Since the vectors $v_1$, $v_2$ and $v_3$ are not all the same, there exists a vector among them that is different from the others. Without loss of generality, suppose $v_3$ is different from $v_1$ and $v_2$. Then, $v_3$ and $v_1$ span the plane $ACFD$. Hence, $V$ contains the vector $\overrightarrow{AC}$. Similarly, we can show that the vector $\overrightarrow{BC}$ is contained in $V$. Because $\overrightarrow{AC}$, $\overrightarrow{BC}$, and $v_3$ are linearly independent, $\dim(V)=3$, a contradiction.
\newline

\noindent\textit{(Case 3):} $\dim(V)=3$.
\newline

It follows that $v_1,v_2$ and $v_3$ are distinct. Since $v_2$ and $v_3$ span the plane $BCFE$, there exists a real number $\alpha_1$ such that the vector $\overrightarrow{BC}=\alpha_1(v_2-v_3)$.  Similarly, there exist real numbers $\alpha_2$ and $\alpha_3$ such that the vector $\overrightarrow{CA}=\alpha_2(v_3-v_1)$ and the vector $\overrightarrow{AB}=\alpha_3(v_1-v_2)$. Take the sum of these equations. We have
$$
(\alpha_3-\alpha_2)v_1+(\alpha_1-\alpha_2)v_2+(\alpha_2-\alpha_3)v_3=0.
$$
Since $v_1,v_2$ and $v_3$ are linearly independent, $\alpha_1=\alpha_2=\alpha_3(:=\alpha)$. It follows that 
$$
A+\alpha v_1=B+\alpha v_2=C+\alpha v_3.
$$
Therefore, the lines $AD$, $BE$ and $CF$ meet at a point.
\end{proof}

\begin{figure}
	\centering
	\includegraphics[scale=0.7]{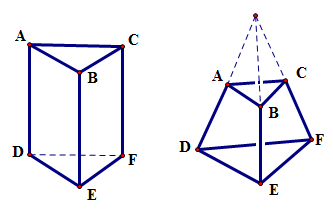}
	\caption{In a combinatorial triangular prism, the lines $AD$, $BE$, and $CF$ are either parallel to each other or concur at a point.}
	\label{fig:prismlines}
\end{figure}

Lorenz Lindel\"{o}f \cite{lind} proved that a surface-area-minimizing $n$-hedron is circumscribed about a sphere, with each face tangent at its centroid.  See the beautiful survey by Florian \cite[pp. 174-180]{florian} and \cite[Prop. 3.1]{pen11} from before we knew about Lindel\"{o}f. For a given combinatorial type, in order to find the surface-area-minimizing polyhedron of that type, it is usually enough to make sure it satisfies Lindel\"{o}f's condition. We prove that the right equilateral-triangular prism minimizes surface area among unit-volume 5-hedra, by showing that if the 5-hedra must satisfy Lindel\"{o}f's conditions, then the only possibility is that it is a right equilateral-triangular prism.

\begin{theorem}[Lindel\"{o}f Theorem \cite{lind}.]
\label{linde}
A necessary condition for a polyhedron P to be the surface-area-minimizing polyhedron is that P circumscribes a sphere, and the inscribed sphere is tangent to all the faces of P at their respective centroids.
\end{theorem}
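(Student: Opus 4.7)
The plan is to prove Lindel\"of's theorem by a first-order variational argument on the face-supporting planes of $P$. Fix the combinatorial type; then locally $P$ is parameterized by its $n$ face planes $\{\mathbf{n}_i \cdot \mathbf{x} = h_i\}$, each carrying three real degrees of freedom $(\delta \mathbf{n}_i, \delta h_i)$ with $\delta \mathbf{n}_i \perp \mathbf{n}_i$. Since $S^3/V^2$ is scale invariant, minimality is equivalent to extremizing the Lagrangian $S - \mu V$ with multiplier $\mu = 2S/(3V)$, as determined by Minkowski's identity $\sum_i h_i A_i = 3V$ and its surface-area analogue $\sum_i h_i(\partial S/\partial h_i) = 2S$.

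First I would compute the first-order volume change under a single-face perturbation. A point $\mathbf{x}$ on $F_i$ shifts perpendicular to the old plane by $\delta h_i - \delta \mathbf{n}_i \cdot \mathbf{x}$ to leading order, so
\[
\delta V \;=\; \int_{F_i} (\delta h_i - \delta \mathbf{n}_i \cdot \mathbf{x})\,dA \;=\; A_i\bigl(\delta h_i - \mathbf{c}_i \cdot \delta \mathbf{n}_i\bigr),
\]
where $\mathbf{c}_i$ is the centroid of $F_i$. Imposing $\delta S = \mu\,\delta V$ for all admissible $(\delta h_i, \delta \mathbf{n}_i)$ and every face $i$ thus decomposes, face by face, into a scalar condition coming from the $\delta h_i$ coefficient and a tangential-vector condition coming from the $\delta \mathbf{n}_i$ coefficient.

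Finally I would extract these two conditions explicitly. A direct edge-by-edge calculation shows $\partial S/\partial h_i = \sum_{j\sim i}\ell_{ij}\cot(\alpha_{ij}/2)$, with $\ell_{ij}$ the shared edge length and $\alpha_{ij}$ the interior dihedral angle. Setting this equal to $\mu A_i$ and using $\mu = 2/r$ with $r := 3V/S$ gives $A_i = \tfrac{r}{2}\sum_{j\sim i}\ell_{ij}\cot(\alpha_{ij}/2)$, the area identity characterizing the existence of a sphere of radius $r$ inscribed tangent to $F_i$ at an interior point $\mathbf{p}_i$; taken across all faces, these identities force $P$ to circumscribe a common inscribed sphere. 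An analogous computation tracking edge shifts under tilts of face $i$'s plane produces a tangential vector equation whose matching with the centroid term $-\mu A_i \mathbf{c}_i$ forces $\mathbf{p}_i = \mathbf{c}_i$ for every face---precisely Lindel\"of's tangent-at-centroid statement. The main obstacle is this tilt calculation: identifying the rotational derivative of $S$ with the tangent-point vector, which requires careful bookkeeping of how both $F_i$ itself and each neighbor $F_j$ reshape under an infinitesimal tilt of face $i$.
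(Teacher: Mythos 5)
The paper does not actually prove Theorem \ref{linde}: it is quoted as a classical result of Lindel\"{o}f \cite{lind}, with Florian's survey and \cite[Prop. 3.1]{pen11} cited for expositions, and it is then invoked as a black box in the proof of Theorem \ref{bestfivepoly}. So you are attempting to reprove the classical theorem, and your variational framework is a legitimate route; indeed your individual ingredients check out: $\delta V = A_i(\delta h_i - \mathbf{c}_i\cdot\delta\mathbf{n}_i)$, the translation derivative $\partial S/\partial h_i = \sum_{j\sim i}\ell_{ij}\cot(\alpha_{ij}/2)$, and the multiplier $\mu = 2S/(3V)$ obtained from the two Euler (Minkowski-type) identities are all correct.

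However, the proposal has a genuine gap in both halves of the conclusion. First, the claim that the translation conditions alone, $A_i = \tfrac{r}{2}\sum_{j\sim i}\ell_{ij}\cot(\alpha_{ij}/2)$ with $r = 3V/S$, ``force $P$ to circumscribe a common inscribed sphere'' is unjustified, and as a characterization it is false: this is one scalar identity per face, whereas tangency of a sphere of radius $r$ to $F_i$ requires a point of the face plane lying at distance $r\cot(\alpha_{ij}/2)$ from every edge line of $F_i$ --- for a face with four or more edges an overdetermined system in two unknowns --- so the area identity is necessary for a circumscribed polyhedron but nowhere near sufficient; and even granting a tangent point in each face you would still have to show that all these per-face spheres share one center. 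Extracting the inscribed sphere therefore requires the tilt ($\delta\mathbf{n}_i$) conditions as well, or a different global argument such as the classical comparison of $P$ with the polyhedron having the same outer normals circumscribed about a ball of radius $3V/S$ via the Minkowski mixed-volume inequality. Second, the tilt computation --- which is exactly what must deliver the tangent-at-centroid statement, and, by the above, is also needed to locate the sphere at all --- is the step you explicitly defer as ``the main obstacle'' and never carry out. Until the rotational derivative of $S$ is computed and matched against the centroid term $\mu A_i\mathbf{c}_i$, the proposal establishes neither of the two assertions of Lindel\"{o}f's theorem; as it stands it is a correct variational setup plus the easy half of the first-order conditions, not a proof.
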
 
 
\begin{theorem}
\label{bestfivepoly}
The right equilateral-triangular prism circumscribed about a sphere minimizes surface area among unit-volume 5-hedra.
\end{theorem}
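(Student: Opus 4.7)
My plan follows the roadmap given just before the theorem. First, by Proposition~\ref{fivefaceopt} a nondegenerate 5-hedron is combinatorially either a triangular prism or a quadrilateral pyramid, and by Proposition~\ref{optprism} the unit-volume right equilateral-triangular prism circumscribed about a sphere strictly beats every quadrilateral pyramid in surface area; combined with the computational exclusion of non-convex candidates noted just before the theorem, this reduces the minimizer to a convex combinatorial triangular prism. Lindel\"{o}f's theorem (Thm.~\ref{linde}) then makes the minimizer circumscribe a sphere $S$ of inradius $r$ and center $O$, tangent at the centroid of each face. Since $V=1$ gives surface area $3/r$, the task reduces to maximizing $r$ over convex combinatorial triangular prisms satisfying the centroid-tangency conditions.

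Next, by Lemma~\ref{combinatorial_triangular_prism_classification} the lateral edges $AD,BE,CF$ are either parallel to a common direction $v$ or concurrent at a point $P$. In the parallel case, I parametrize $D=A+s_1v$, $E=B+s_2v$, $F=C+s_3v$ and impose the five tangency-at-centroid equations. Tangency at $G_1 = $ centroid$(\triangle ABC)$ puts $O$ at distance $r$ from $G_1$ perpendicular to $\triangle ABC$; imposing the analogous condition at $G_2=$ centroid$(\triangle DEF)$ and the three tangencies on the lateral faces forces $s_1=s_2=s_3$ (a true translation prism), $v$ perpendicular to the two bases (a right prism), and $G_1$ equidistant from $A,B,C$ (equilateral base). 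Geometrically, the last condition is cleanest to extract: in a right translation prism each lateral face is a parallelogram whose centroid is the intersection of its diagonals, and equating this to the foot of perpendicular from $O$ onto that lateral plane forces the perpendicular from $G_1$ to each side of $\triangle ABC$ to land at that side's midpoint, making $G_1$ the circumcenter and hence $\triangle ABC$ equilateral. Proposition~\ref{bestheight} then pins down the unique unit-volume right equilateral-triangular prism and its surface area.

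The main obstacle is ruling out the concurrent case. Here all three lateral planes pass through $P$, so $S$ is simultaneously inscribed in the trihedral cone at $P$ and tangent to the two cross-sections $\triangle ABC$, $\triangle DEF$ at their centroids. The centroid of a trapezoidal lateral face is a weighted average of the midpoints of its two parallel edges, not the vertex average, which makes the tangency equations algebraically heavier than in the parallel case. My strategy is to first treat the symmetric sub-case in which $\triangle ABC$ is equilateral and $P$ lies on its axis: a direct computation reduces the lateral-face tangency equation to $k^2=0$ for the frustum-scaling parameter $k$, forcing $\triangle DEF$ to collapse onto $\triangle ABC$ and yielding only a degenerate configuration. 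I would then reduce the general concurrent sub-case to this symmetric one, either by showing that the tangency conditions on the two triangular sections combined with the three lateral tangencies force equilateral-plus-axial symmetry, or by bounding the inradius of any non-symmetric concurrent configuration strictly below that of the right equilateral-triangular prism. Either way, the concurrent case is eliminated, leaving the parallel case and its unique minimizer.
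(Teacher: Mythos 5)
Your overall reduction (existence via Minkowski, Propositions \ref{fivefaceopt} and \ref{optprism}, Lindel\"{o}f's Theorem \ref{linde}, and the parallel-or-concurrent dichotomy of Lemma \ref{combinatorial_triangular_prism_classification}) is exactly the paper's, but the decisive step---showing that the centroid-tangency conditions admit only the right equilateral-triangular prism---is where your proposal stops short, and that is a genuine gap. In the concurrent case you only compute the sub-case in which the base triangle is equilateral and the concurrence point lies on its axis, and then offer two alternative strategies (``either \dots or \dots'') for reducing the general concurrent configuration to that sub-case, without carrying either out; as written this is a plan, not a proof, and the general non-symmetric concurrent configuration is precisely the hard part. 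In the parallel case, the concrete geometric argument you give (lateral-face centroid as the intersection of the diagonals, forcing the foot of the perpendicular from the base centroid to each base edge to be its midpoint) presupposes that the lateral faces are parallelograms, i.e.\ that $s_1=s_2=s_3$ and the prism is right---the very facts you claim the tangency equations ``force.'' For unequal $s_i$ the lateral faces are trapezoids, whose centroids (as you yourself observe in the concurrent case) are not vertex averages, so this forcing needs an actual argument rather than an assertion.

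For comparison, the paper settles both cases cheaply by working with the five tangency points themselves. Writing $T_1,T_2$ for the tangency points on the triangular faces, $T_3,T_4,T_5$ for those on the quadrilateral faces, and $M_1,M_2,M_3$ for the midpoints of the lateral edges, it uses the centroid conditions to get $M_1+M_2=2T_3$ and its cyclic analogues, hence $M_1=T_5+T_3-T_4$ etc.; combining $OT_4\perp Q_4$ with $\overrightarrow{M_2M_3}=2\overrightarrow{T_3T_5}$ and $|OT_3|=|OT_4|=|OT_5|$ shows $T_3T_4T_5$ is equilateral. The concurrent case then dies in a few lines: orienting $T_3T_4T_5$ horizontally, non-parallel lateral edges force the plane of $T_3T_4T_5$ to miss the center $O$, so $T_1$ and $T_2$ lie at different distances from that plane, contradicting $|OT_1|=|OT_2|$; and in the parallel case a short projection argument yields that the bases are perpendicular to the lateral edges and congruent to the equilateral triangle $M_1M_2M_3$. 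If you wish to keep your coordinate plan, you must actually write down and solve the tangency system in the oblique-parallel case and in the general concurrent case; otherwise, arguing through the tangency points as the paper does is substantially shorter and closes exactly the steps your proposal leaves open.
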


\begin{proof}
A surface-area-minimizing 5-hedron $X$ exists \cite{mink}. By Proposition \ref{combinatorial_prism_face_3}, we may assume that it is nondegenerate. By Lindel\"{o}f's Theorem \cite{lind}, $X$ is circumscribed about a sphere tangent to each face of $X$ at its centroid. By Proposition \ref{optprism}, $X$ cannot be a square pyramid; therefore by Proposition \ref{fivefaceopt}, $X$ is a combinatorial triangular prism.  Define $ABC$ and $DEF$ as the triangular bases of $X$ and $AD$, $BE$, and $CF$ as the edges. To simplify notation, we refer to the bases $ABC$ and $DEF$ as $B_1$ and $B_2$, respectively and the three quadrilateral faces - $ABED$, $BCFE$, and $CADF$ - as $Q_3$, $Q_4$ and $Q_5$, respectively (Fig. \ref{fig:tprism}).

\begin{figure}
	\centering
	\includegraphics[scale=0.5]{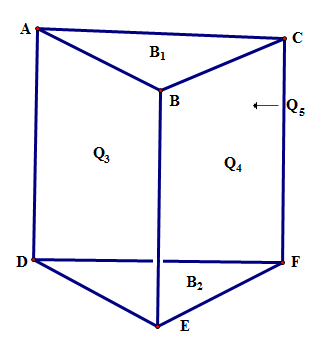}\\
	\caption{By Proposition \ref{optprism}, the surface-area-minimizing 5-hedron $X$ cannot be a square pyramid; therefore by Proposition \ref{fivefaceopt}, $X$ is a combinatorial triangular prism.}
	\label{fig:tprism}
    \includegraphics[scale=0.7]{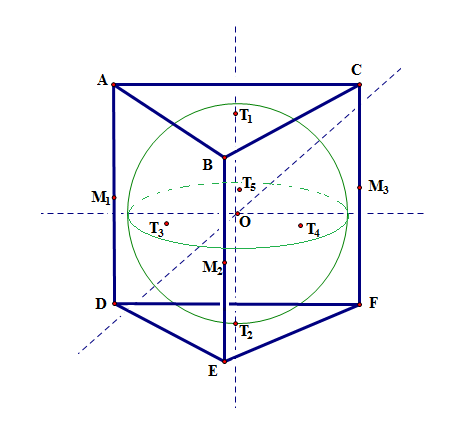}
	\caption{The right equilateral-triangular prism circumscribed about a sphere tangent to each face at its centroid minimizes surface area among unit-volume 5-hedra.}
	\label{fig:besttprism}
\end{figure}

Let $O$ be the center of a sphere inscribed in $X$. Let $T_1, T_2, T_3, T_4$ and $T_5$ be the touching points between the sphere and faces $B_1, B_2, Q_3, Q_4$ and $Q_5$, respectively. Finally, let $M_1$, $M_2$ and $M_3$ be midpoints of $AD$, $BE$ and $CF$, respectively. Imagine we place $X$ in Euclidean space such that $O$ is at the origin (Fig. \ref{fig:besttprism}). 
\newline
\newline
\noindent \textbf{(Step 1)} The midpoint of $T_1T_2$ is the centroid of $T_3T_4T_5$.
\newline
\newline
This follows from the observation that both of them are the centroid of $X$.
\newline
\newline
\noindent \textbf{(Step 2)} The quadrilaterals $M_1T_3T_4T_5$, $M_2T_3T_5T_4$, and $M_3T_4T_3T_5$ are parallelograms.
\newline
\newline
Since $T_3$ is the centroid of $Q_3$, we have that $M_1+M_2=2T_3$. Similarly, we have $M_2+M_3=2T_4$ and $M_3+M_1=2T_5$. By solving this linear equation for $M_1$, $M_2$, and $M_3$, we have $M_1=T_5+T_3-T_4$, $M_2=T_3+T_4-T_5$, and $M_3=T_4+T_5-T_3$, as desired.
\newline
\newline
\noindent \textbf{(Step 3)} $T_3T_4T_5$ is an equilateral triangle.
\newline
\newline
Observe that the face $BEFC$ is perpendicular to the line $OT_4$. Therefore, $\overrightarrow{OT_4} \cdot \overrightarrow{M_2M_3} =0$. Additionally, from \textbf{(Step 2)}, we have  $\overrightarrow{M_2M_3}=2\overrightarrow{T_3T_5}$. Hence $\overrightarrow{OT_4} \cdot \overrightarrow{T_3T_5}=O$.
This is equivalent to  $\overrightarrow{OT_4} \cdot \overrightarrow{OT_5} = \overrightarrow{OT_4} \cdot \overrightarrow{OT_3}$.  Together with the fact that $|OT_5|=|OT_3|$, we have that $|T_4T_5|= |T_3T_4|$.  Similarly, we can show that $|T_4T_5| = |T_3T_5|$. 
Therefore, $T_4T_4T_5$ is an equilateral triangle.
\newline
\newline
\noindent \textbf{(Step 4)} $X$ is the right equilateral-triangular prism circumscribed by a sphere.
\newline
\newline
By Lemma \ref{combinatorial_triangular_prism_classification}, $AD$, $BE$, and $CF$ are parallel to each other or they concur at a point.
\newline

\noindent \textit{(Case 1)}: $AD$, $BE$, and $CF$ are parallel to each other.
\newline

We orient $X$ such that $AD$, $BE$, and $CF$ are parallel to the $z$-axis and $O$ is at the origin. Define $\pi: R^{3} \rightarrow R^{2}$ be the projection map from the whole Euclidean space to $xy$-plane.  Let $z(X)$ denote the $z$-component of any point $X$ in Euclidean space.

First, observe that the tangent planes of the sphere at points $T_3, T_4$ and $T_5$ are parallel to the z-axis. It follows that $z(T_3)=z(T_4)=z(T_5)=0$, so $T_3,T_4$ and $T_5$ lie on the $xy$-plane.  Then, by \textbf{(Step 3)}, we have that the centroid of $T_3T_4T_5$ is the origin $O$. It follows, by \textbf{(Step 2)}, that the centroid of $M_1M_2M_3$ is also the origin.

Because projection maps are linear, it preserves centroids.  Since the triangle $\pi(A)\pi(B)\pi(C)$ is equivalent to $M_1M_2M_3$, $\pi(T_1)$ is the origin $O$. Similarly, $\pi(T_2)$ is $O$.

Therefore, the $B_1$ and $B_2$ are perpendicular to the lines $AD$, $BE$, and $CF$. This implies that $B_1$, $B_2$, and $M_1M_2M_3$ are congruent to each other.  From \textbf{(Step 2)} and \textbf{(Step 3)}, the triangle $M_1M_2M_3$ is equilateral. Then $B_1$ and $B_2$ are also equilateral. Hence, $X$ is the unit-volume equilateral-triangular prism circumscribed about a sphere.
\newline

\noindent \textit{(Case 2)}: $AD$, $BE$, and $CF$ concur at a point.
\newline

We now orient $X$ such that $T_3T_4T_5$ is parallel to the $xy$-plane and $O$ is at the origin. Since $T_3T_4T_5$ is an equilateral triangle, the projection of $T_1$ to the $xy$-plane is the origin $O$. By \textbf{(Step 1)}, the midpoint of $T_1T_2$ also projects to the origin in $xy$-plane.

From the assumption of this case, $AD$, $BE$, and $CF$ are not parallel to the $z$-axis. Therefore, the plane containing $T_3T_4T_5$ does not contain the origin. Hence, the distances from the plane containing $T_3T_4T_5$ to $T_1$ and to $T_2$ are different.  Therefore, we deduce that $OT_1$ $\neq$ $OT_2$, a contradiction. It follows that this case is impossible.
\end{proof}

\begin{corollary}
\label{triprismtile}
The right equilateral-triangular prism circumscribed about a sphere, having base-length $4^{1/3}$ and height $4^{1/3}3^{-1/2}$, is the surface-area-minimizing 5-hedral tile.
\end{corollary}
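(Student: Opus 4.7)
The plan is to combine Theorem \ref{bestfivepoly} with the simple observation that the right equilateral-triangular prism of the prescribed dimensions actually tiles $\R^3$ face-to-face. Theorem \ref{bestfivepoly} asserts that the right equilateral-triangular prism circumscribed about a sphere is surface-area-minimizing among all unit-volume 5-hedra (not merely among 5-hedral tiles). Since every 5-hedral tile is in particular a 5-hedron, this immediately furnishes a lower bound on the surface area of any unit-volume 5-hedral tile.

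Next, I would verify that the right equilateral-triangular prism tiles space face-to-face. Congruent equilateral triangles of fixed side length tile the plane (six meeting at each vertex), so a layer of congruent right equilateral-triangular prisms with bases lying in a plane fills a slab of $\R^3$ whose thickness equals the prism's height. Stacking translated copies of this slab in the direction perpendicular to the bases tiles all of $\R^3$. Triangular bases meet triangular bases across horizontal layers, and quadrilateral side faces meet quadrilateral side faces across vertical planes, so the tiling is face-to-face. Hence the lower bound furnished by Theorem \ref{bestfivepoly} is attained within the class of 5-hedral tiles.

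Finally, the specific dimensions follow from Proposition \ref{bestheight} applied to the equilateral-triangular base: with $A_0 = \sqrt{3}\,s^2/4$ and $P_0 = 3s$, the unit-volume condition together with the optimal-height formula $h = (4\sqrt{A_0}/P_0)^{2/3}$ forces $s = 4^{1/3}$ and $h = 4^{1/3}3^{-1/2}$, matching the values already computed in Proposition \ref{optprism}. There is no real obstacle here; the content of the corollary is simply that the lower bound furnished by the 5-hedron theorem is achieved within the narrower class of tiles, so the minimizer among tiles coincides with the unconstrained minimizer.
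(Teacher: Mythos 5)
Your proposal is correct and takes essentially the same route as the paper: the paper's proof simply notes that the prism is surface-area-minimizing among all unit-volume 5-hedra by Theorem \ref{bestfivepoly} and is a tile, hence it minimizes among tiles. You merely spell out the face-to-face tiling construction and the dimension check (via Proposition \ref{bestheight}) that the paper leaves implicit.
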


\begin{proof} 
Since the prism is surface-area-minimizing by Theorem \ref{bestfivepoly} and is a tile, it gives the surface-area-minimizing tiling.
\end{proof}

\begin{remark}
\emph{Since equilateral triangles are the perimeter-minimizing polygons of 3 sides, Corollary \ref{triprismtile} also follows directly from Proposition \ref{montile}.}
\end{remark}



\bibliographystyle{abbrv}
\bibliography{main}

\begin{thebibliography}{9}
\bibitem[BDH]{qhull}{Barber, C.B., Dobkin, D.P., Huhdanpaa, H.T., The Quickhull algorithm for convex hulls, ACM Trans. on Mathematical Software, 22(4):469-483, (Dec 1996), http://www.qhull.org.}
\bibitem[BMP]{disgeo} {Peter Brass, W. O. J. Moser, J\'{a}nos Pach, Research Problems in Discrete Geometry, Springer, 2005.} 
\bibitem[CFS]{pen11} {Ping Ngai Chung, Miguel A. Fernandez, Yifei Li, Michael Mara, Frank Morgan, Niralee Shah, Luis Sordo Vieira, Elena Wikner, Isoperimetric pentagon tilings, Notices Amer. Math. Soc. 50 (May 2012), 632-640.}
\bibitem[E]{edmonds} {Allan L. Edmonds, Sommerville's missing tetrahedra, Discrete  Comput. Geom. 37 (2007), 287-296}.
\bibitem[FT]{ftoth} {L. Fejes T\'{o}th, The isepiphan problem for $n$-hedra, Amer. J. Math. 70 (1948), 174-180}.
\bibitem[F]{florian} {A. Florian, Extremum problems for convex discs and polyhedra, Handbook of Convex Geometry (P.M. Gruber and J.M. Willis, eds), Elsevier Science, 1993, 177-221}.
\bibitem[G1]{goldberg>12} {Michael Goldberg, Convex polyhedral space-fillers of more than twelve faces, Geom. Dedicata 8 (1979), 491-500.}
\bibitem[G2]{goldberg} {Michael Goldberg, The isoperimetric problem for polyhedra, Tohoku Math. J. (First Series) 40 (1934), 226-236. Referenced by [BMA].}
\bibitem[G3]{goldbergdodeca} {Michael Goldberg, On the dodecahedral space-fillers, Geom. Dedicata 10 (1981) 79-89.}
\bibitem[G4]{goldbergennea} {Michael Golberg, On the space-filling enneahedra, Geom. Dedicata 12 (1982), 297-306.}
\bibitem[G5]{goldberghepta}{Michael Goldberg, On the space-filling hexahedra, Geom. Dedicata 7 (1978), 175-184}
\bibitem[G6]{goldbergocta} {Michael Goldberg, On the space-filling octahedra, Geom. Dedicata 10 (1981), 323-335.}
\bibitem[G7]{goldbergtetra} {Michael Goldberg, Three infinite families of tetrahedral space-fillers, J. Combin. Theory 16 (1974), 39-52.}
\bibitem[Gr]{grunbaum} {Branko Gr\"{u}nbaum and G.C. Shephard, Tilings with congruent tiles, Bull. Amer. Math. Soc. (N.S.) 3 (1980), 951-973.}
\bibitem[H]{hales} {Thomas C. Hales, The honeycomb conjecture, Disc. Comp. Geom. 25 (2001), 1-22. Available at http://arxiv.org/abs/math.MG/9906042}.
\bibitem[Ke]{kelvin} {Lord Kelvin (Sir William Thomson), On the division of space with minimum partitional area, Philos. Mag. 24 (1887), 503}.
\bibitem[LMPW]{g10} {Yifei Li, Michael Mara, Isamar Rosa Plata, Elena Wikner, Tiling with penalties and isoperimetry with density, preprint (2010)}.
\bibitem[Li] {lind} {L. Lindel\"{o}f, Propri\'{e}t\'{e}s g\'{e}n\'{e}rales des poly\`{e}dres qui, sous une \'{e}tendue superficielle donn\'{e}e, renferment le plus grand volume, Bull. Acad. Sci. St. P\'{e}tersbourg 14 (1869), 257-269 and Math. Ann. 2 (1870), 150-159}.
\bibitem[M] {mink} {H. Minkowski, Allgemeine Lehrs\"{a}tze \"{u}ber konvexer Polyeder. Nachr. K. Akad. Wiss. G\"{o}ttingen, Math. -Phys. Kl. ii (1897), 198-219.}
\bibitem[M1]{morggeo} {Frank Morgan, Geometric Measure Theory: A Beginner's Guide, 4th ed., Academic Press, 2009.}
\bibitem[Se] {senechal} {Marjorie Senechal, Which tetrahedra fill space?, Math. Magazine 54 (Nov., 1981), 227-243}.
\bibitem[So]{somville} {D.M.Y. Sommerville, Space-filling tetrahedra in Euclidean space, Proc. Edinb. Math. Soc. 41 (1923), 85-116}.
\bibitem[St]{Steiner2} {J. Steiner, \"{U}ber Maximum und Minimum bei den Figuren in der Ebene, auf der Kugelflache und im Raume uberhaupt, J. reine Angew. Math. 24, (1842), 93-162, 189-250. [Ges. Werke, 2 (1882) 177-308]. Referenced by [F] p. 209}.
\bibitem[Ste]{Steiz} {E. Steinitz, \"{U}ber isoperimetrische Probleme bei konvexen Polyedren I, II, J. Reine Angew. Math. 158 (1928), 129-153; 159 (1928), 133-134. Referenced by [F] p.211}.
\bibitem[Stu]{sturmfels} {Bernd Sturmfels, private discussion at Mathfest 2012.}
\bibitem[Suc]{french} {C.-G Sucksdorff, Determination du pentaedre de volume donne, dont la surface est un minimum, J. Math. Pures Appl. 2 (1857), 91-109. Referenced by [F] p. 211}.
\bibitem[T]{taylornets} {Jean E. Taylor, The structure of singularities in soap-bubble-like and soap-film-like minimal surfaces, Ann. of Math (2) 103 (May, 1976), 489-539.}
\bibitem[U]{ungor} {Alper \"{U}ng\"{o}r, Tiling 3D Euclidean Space with Acute Tetrahedra, Canadian Conference on Computational Geometry (2001).}
\bibitem[W]{wolfram} {Eric W. Weisstein, Space-filling polyhedron. From MathWorld - A Wolfram Web Resource. http://mathworld.wolfram.com/Space-FillingPolyhedron.html.}
\bibitem[We]{wells}{David G. Wells, The Penguin Dictionary of Curious and Interesting Geometry, Penguin Books, 1991.}
\bibitem[Wi]{wiki}{Wikimedia Commons.}
\end{thebibliography}

\bigskip
Whan Ghang, Massachusetts Institute of Technology
\newline
\textit{Email:} ghangh@MIT.edu
\newline
\newline
Zane Martin, Williams College 
\newline
\textit{Email:} zkm1@williams.edu
\newline
\newline
Steven Waruhiu, University of Chicago
\newline
\textit{Email:} waruhius@uchicago.edu

\end{document}